\newcommand\I{\mathfrak{T}}
\def\dt{\partial_t}
\def\T{T^\sharp}
\newcommand\mD{{\cal D}}
\def\<{\langle}
\def\>{\rangle}
\def\RR{\mathbb{R}}
\def\eps{\varepsilon}
\newcommand\tr{\operatorname{Tr}}
\newcommand\Div{\operatorname{div}}
\def\Ric{\mathcal Ric}
\def\vol{\operatorname{vol}}
\def\eq{\hspace*{-1.5mm}&=&\hspace*{-1.2mm}}
\def\plus{\hspace*{-1.5mm}&+&\hspace*{-1.2mm}}
\newtheorem{corollary}{Corollary}
\newtheorem{definition}{Definition}
\newtheorem{example}{Example}
\newtheorem{remark}{Remark}
\newtheorem{lemma}{Lemma}
\newtheorem{proposition}{Proposition}
\newtheorem{theorem}{Theorem}
\author{Vladimir Rovenski\footnote{Department of Mathematics, University of Haifa, Mount Carmel, 31905 Haifa,  Israel
       \newline e-mail: {\tt vrovenski@univ.haifa.ac.il} } }
\title{On the mixed scalar curvature of almost multi-product manifolds}
\begin{document}

\date{}

\maketitle

\begin{abstract}
A pseudo-Riemannian manifold endowed with $k>2$ orthogonal complementary distributions
(called a Riemannian almost multi-product structure) appears in such topics as multiply warped products, the webs composed of several foliations,
Dupin hypersurfaces and in stu\-dies of the curvature and Einstein equations.
In this article, we consider the following two problems on the mixed scalar curvature of a Riemannian almost multi-product manifold with a linear connection:
a)~integral formulas and applications to splitting of manifolds,
b)~variation formulas and applications to the mixed Einstein-Hilbert action,
and we gene\-ralize certain results on the mixed scalar curvature of pseudo-Riemannian almost product manifolds.

\vskip.5mm\noindent
\textbf{Keywords}:
Almost multi-product manifold,
mixed scalar curvature,
statistical structure,
mixed Einstein-Hilbert action,
semi-metric connection,
integral formula,
multiply twisted product.

\vskip.5mm
\noindent
\textbf{Mathematics Subject Classifications (2010)} 53C15; 53C12; 53C40

\end{abstract}

\section*{Introduction}

 Distributions on a manifold (that is subbundles of the tangent bundle) appear in various situ\-ations, e.g., \cite{bf,g1967},
and are used to build up notions of integrability, and specifically of a foliated manifold.
The~mixed scalar curvature is the simplest curvature invariant of a Riemannian almost product mani\-fold,
and its study led to many results concerning the existence of foliations and submersions with interesting geometry, for example, integral formulas, splitting results, prescribing the curvature and variational problems, e.g., \cite{arw2014,RWa-1,rze-27b,step1,wa1}.
%
Understanding the properties of this kind of curvature is a deep interesting problem of the geometry of foliations.

In this article, we consider
a smooth connected $n$-dimensional pseudo-Riemannian manifold $(M,g)$ endowed with a linear connection $\bar\nabla$ and
$k>2$ pairwise orthogonal $n_i$-dimensional non-degenerate distributions $\mD_i$ with $\sum n_i=n$.
Such $(M,g,\bar\nabla;\mD_1,\ldots,\mD_k)$, called a \textit{metric-affine almost multi-product manifold},
appears in studies of the curvature and Einstein equations
on {multiply twisted and multiply warped products}, see \cite{chen1,DU,Dimitru,GDH-B,K2020,PK2019,wang,wang2},
in the theory of webs composed of different foliations,~see~\cite{AG2000}, and Dupin hypersurfaces
(which have a constant number of different principal curvatures),
 see~\cite{cecil-ryan,r-IF-k}.
We~define the mixed scalar curvature of $(M,g,\bar\nabla;\mD_1,\ldots,\mD_k)$
and generalize results of the following two problems concerning this kind of curvature of almost product manifolds:

$\bullet$~integral formulas and applications to splitting of manifolds, see~\cite{r-affine,step1,wa1}.

$\bullet$ variation formulas and applications to the mixed Einstein-Hilbert action,

see~\cite{bdrs,bdr,r2018,rz-1,rz-2,rz-3}.

\smallskip\noindent
Integral formulas (usually obtained by applying the Divergence Theorem to appropriate vector fields)
 provide a powerful tool for proving global results in Analy\-sis and Geometry, e.g., \cite{arw2014}.
The~first known integral formula for a closed Riemannian manifold
endowed with a codimension one foliation tells us that the total (i.e., integral) mean curvature
of the leaves vanishes, see~\cite{reeb1}. The~second formula in the series of total $\sigma_k$'s --
elementary symmetric functions of principal curvatures of the leaves -- says that for a codimension one foliation with a unit normal $N$ to the leaves the total $\sigma_2$ is a half of the {total Ricci curvature} in the $N$-direction,~e.g., \cite{arw2014}:
\begin{equation}\label{E-sigma2}
 \int_M (\,\sigma_2-\frac12\,{\rm Ric}_{N,N})\,{\rm d}\vol=0.
\end{equation}
By \eqref{E-sigma2}, there are no totally umbilical foliations on a closed manifold of negative Ricci curvature.
Walczak \cite{wa1} generalized \eqref{E-sigma2} for the total mixed scalar curvature a Riemannian manifold endowed with two complementary orthogonal distributions,
and the integral formula has many applications, e.g., \cite{arw2014,bdrs,bdr,r2018,rz-1,rz-2}.




Many pseudo-Riemannian metrics and linear connections come (as critical points) from variational problems, a~particularly famous of which is the {Einstein-Hilbert action}, e.g., \cite{besse,tr}.
 The~Euler-Lagrange equation for the action when $g$ varies is the {Einstein equation}
with a constant $\Lambda$ (the~``cosmological constant"), the coupling constant $\mathfrak{a}$ and the energy-momentum tensor $\Xi$.
The Euler-Lagrange equation for the action, when a linear connection varies is an algebraic constraint with the torsion tensor
of $\bar\nabla$ and the spin tensor (used to describe the intrinsic angular momentum of particles, e.g.,~\cite{tr}).

On~a manifold equipped with an additional structure related with a distribution (e.g., contact and almost product \cite{bf,g1967}), one can consider an analogue of Einstein-Hilbert action adjusted to that structure,
see \cite{bdrs,bdr,r2018,rz-1,rz-2}.
 We~study
an analog of the Einstein-Hilbert action, where
the scalar curvature
is replaced by the mixed scalar curvature $\overline{\rm S}_{\,\mD_1,\ldots,\mD_k}$, see~\eqref{E-Smix-k}.
This \textit{mixed Einstein-Hilbert action} is a functional of a
metric $g$ and a contorsion tensor $\I:=\bar\nabla-\nabla$ (where $\nabla$ is the Levi-Civita connection):
\begin{equation}\label{Eq-Smix}
 \bar J_\mD : (g,\I) \mapsto\int_{M} \Big\{\frac1{2\mathfrak{a}}\,(\overline{\rm S}_{\,\mD_1,\ldots,\mD_k}-2\,{\Lambda})+{\cal L}\Big\}\,{\rm d}\vol_g\,,
\end{equation}
and its geometrical part is the ``total mixed scalar curvature",
\begin{equation}\label{Eq-Smix-g}
 \bar J^g_\mD : (g,\I)\mapsto\int_{M} \overline{\rm S}_{\,\mD_1,\ldots,\mD_k}\,{\rm d}\vol_g \,.
\end{equation}
 To deal with non-compact manifolds
one may integrate over arbitrarily large, relatively compact domain $\Omega\subset M$, which also contains supports
of variations of $g$ and $\I$.
Varying \eqref{Eq-Smix} with fixed $\I$ in the class of adapted metrics preserving the volume of the manifold, we obtain the Euler-Lagrange equation in the beautiful form of the Einstein equation (for $k=2$, see \cite{bdrs,rz-1,rz-2,rz-3})
\begin{equation}\label{E-gravity}
 \overline\Ric_{\,\mD } -\,(1/2)\,\overline{\cal S}_{\,\mD}\cdot g +\,\Lambda\,g = \mathfrak{a}\cdot\Xi \,,
\end{equation}
where the Ricci tensor and the scalar curvature are replaced by the new
Ricci type tensor
$\overline\Ric_{\,\mD }$, see Section~\ref{sec:EH-action},
and its~trace $\overline{\cal S}_{\mD}$.
 Although generally $\overline\Ric_{\,\mD}$ has a long expression and is not given here, we present it
explicitly for two distinguished classes: statistical connections and semi-symmetric connections.
We also show that a contorsion tensor $\I$ of any adapted statistical connection is critical for \eqref{Eq-Smix} with fixed $g$.
 We suggest that our integral formulas (in Section~\ref{sec:if}) and action \eqref{Eq-Smix} can be useful
 for differential geometry of multiply warped (and twisted) products and in the theory of webs as well as in studying the interaction of several $m$-flows ($m$-dimensional distributions) in the multi-time geometric dynamics, e.g.,~\cite{Neagu,ut}.

 We delegate the following tasks for further study:
a)~find geometrically interesting critical points of the action \eqref{Eq-Smix};
b)~generalize our results for arbitrary variations of metric on an almost multi-product manifold;
c)~find applications of our results in geometry, dynamics and physics.

\section{The mixed scalar curvature}

A~{pseudo-Riemannian metric} $g=\<\cdot,\cdot\>$ of index $q$ on $M$ is an element $g\in{\rm Sym}^2(M)$
of the space of symmetric $(0,2)$-tensors
such that each $g_x\ (x\in M)$ is a {non-degenerate bilinear form of index} $q$ on the tangent space $T_xM$.
For~$q=0$ (i.e., $g_x$ is positive definite) $g$ is a Riemannian metric and for $q=1$ it is called a Lorentz metric.
 A distribution $\mD$ on $(M,g)$ is \textit{non-degenerate},
if $g_x$ is non-degenerate on $\mD_x\subset T_x M$ for all $x\in M$; in this case, the orthogonal complement
of~$\mD ^\bot$ is also non-degenerate, e.g., \cite{bf}.
Denote by ${\rm Riem}(M,\mD_1,\ldots\mD_k)$ the space of pseudo-Riemannian metrics making distributions $\{\mD_i\}$ pairwise orthogonal and non-degenerate.
Let $P_i:TM\to\mD_i$ and $P_{\,i}^\bot:TM\to \mD_{\,i}^\bot$ be orthoprojectors.
The~second fundamental form $h_i:\mD_i\times \mD_i\to \mD_i^\bot$
and the skew-symmetric integrability tensor $T_i:\mD_i\times \mD_i\to \mD_i^\bot$ of $\mD_i$ are defined by
\[
 2\,h_i(X,Y) = P_{\,i}^\bot(\nabla_XY+\nabla_YX),\quad
 2\,T_i(X,Y) = P_{\,i}^\bot(\nabla_XY-\nabla_YX)
             = P_{\,i}^\bot\,[X,Y].
\]
Similarly, $h_{\,i}^\bot,\,H_{\,i}^\bot=\tr_g h_{\,i}^\bot,\,T_{\,i}^\bot$ are
the~second fundamental forms, mean curvature vector fields and the integrability tensors
of distributions $\mD_{\,i}^\bot$.
 Note that $H_i=\sum\nolimits_{\,j\ne i} P_j H_i$, etc.
A~distribution $\mD_i$ is called integrable if $T_i=0$,
and $\mD_i$ is called {totally umbilical}, {harmonic}, or {totally geodesic},
if ${h}_i=({H}_i/n_i)\,g,\ {H}_i =0$, or ${h}_i=0$, respectively, e.g., \cite{bf}.

The \textit{mixed scalar curvature} for a pair of distributions $(\mD,\mD^\bot)$ on a pseudo-Riemannian manifold $(M,g)$ is given~by
\begin{equation*}
 {\rm S}_{\,\mD,\mD^\bot} = \sum\nolimits_{1\le a\le \dim\mD,\,\dim\mD<b\le \dim M}\eps_a \eps_b\,\<R_{\,E_a, E_b} E_a, E_b\>,
\end{equation*}
e.g., \cite{wa1}, where $\{E_1,\ldots,E_n\}$ is a~local $g$-orthonormal frame on $M$ such that
 $E_a\in\mD$ for $1\le a\le \dim\mD$, $E_b\in\mD$ for $\dim\mD<b\le \dim M$ and $\eps_i=\<E_{i},E_{i}\>\in\{-1,1\}$.

The metric-affine~geometry, founded by E.\,Car\-tan, generalizes Riemannian geome\-try:
it considers metric and linear connection as two independent variables
and uses a linear connection $\bar\nabla=\nabla+\I$ instead of the Levi-Civita connection $\nabla$,
 e.g.,~\cite{bf,mikes}.
We~define
auxiliary (1,2)-tensors $\I^*$ and $\I^\wedge$ by
\[
\<\I^*_X Y,Z\> = \<\I_X Z, Y\>,\quad \I^\wedge_X Y = \I_Y X,
\quad X,Y,Z\in\mathfrak{X}_M .
\]
The~following distinguished classes of metric-affine manifolds are considered important.

$\bullet$~\textit{Statistical manifolds}, where the tensor $\bar\nabla g$ is symmetric in all its entries and connection $\bar\nabla$ is  torsion-free
(these conditions are equivalent to $\I^\wedge=\I$ and $\I^* = \I$ or symmetry of the cubic form $A(X,Y,Z):=\<\I_XY,\,Z\>$), constitute an important class of metric-affine manifolds with applications in probability and statistics as well as in information geometry, e.g., \cite{Amari2016,op2016}.

$\bullet$~\textit{Riemann-Cartan manifolds}, where $\bar\nabla g=0$ (i.e., the $\bar\nabla$-parallel transport along the curves preserves the metric), e.g., \cite{gps,rze-27b}.
This is equivalent to $\I^*=-\I$, or the equality $A(X,Y,Z)=A(X,Z,Y)$, and $\bar \nabla$ is then called metric compatible (or, metric connection).
\textit{Semi-symmetric connections} \cite{Yano} constitute a special class of metric connections with applications in geometry and physics.

For the curvature tensor $\bar R_{X,Y}=[\bar\nabla_Y,\bar\nabla_X]+\bar\nabla_{[X,Y]}$ of a linear connection $\bar\nabla$,
we have
\begin{equation*}
\bar R_{X,Y} -R_{X,Y} = (\nabla_Y\,\I)_X -(\nabla_X\,\I)_Y +[\I_Y,\,\I_X],
\end{equation*}
where $R_{X,Y}=[\nabla_Y,\nabla_X]+\nabla_{[X,Y]}$ is the curvature tensor of $\nabla$.
One can also consider the scalar curvature
$\overline{\rm S}= \tr_g\overline{\rm Ric}$, where
$\overline{\rm Ric}(X,Y)=\frac12\tr(Z\to\bar R_{X,Z}\,Y{+}\bar R_{X,Z}\,Y)$
is the Ricci tensor of $\bar\nabla$.

The mixed scalar curvature of $(M,g;\mD_1,\ldots,\mD_k)$ is defined similarly as an averaged mixed sectional curvature.
A plane in $TM$ spanned by two vectors belonging to different distributions $\mD_i$ and $\mD_j$ will be called~\textit{mixed},
and its sectional curvature will be called mixed.
Given $g\in{\rm Riem}(M,\mD_1,\ldots\mD_k)$, there exists a~local $g$-orthonormal frame $\{E_1,\ldots,E_n\}$ on $M$ such that
 $\{E_1,\ldots, E_{n_1}\}\subset\mD_1$ and  $\{E_{n_{\,i-1}+1},\ldots, E_{n_i}\}\subset\mD_i$ for $2\le i\le k$, and $\eps_a=\<E_{a},E_{a}\>\in\{-1,1\}$.
All quantities defined below using such frame do not depend on the choice of this~frame.

\begin{definition}[see \cite{r-IF-k}]\rm
Given $g\in{\rm Riem}(M;\mD_1,\ldots,\mD_k)$
 and a linear connection $\bar\nabla$ on $M$,
the following function on $M$ will be called the \textit{mixed scalar curvature} with respect to $\bar\nabla$:
\begin{equation}\label{E-Smix-k}
 \overline{\rm S}_{\,\mD_1,\ldots,\mD_k}=\sum\nolimits_{\,i<j}\overline{\rm S}\,(\mD_i,\mD_j),
\end{equation}
where
\[
 \overline{\rm S}\,(\mD_i,\mD_j) = \frac12\sum\limits_{\,n_{\,i-1}<a\,\le n_i,\ n_{j-1}<b\le n_j}
 \eps_a\,\eps_b\big(\<\bar R_{E_a,{E}_b}\,E_a,\,{E}_{b}\> + \<\bar R_{E_b,{E}_a}\,E_b,\,{E}_{a}\>\big).
\]
The function ${\rm S}_{\,\mD_1,\ldots,\mD_k}=\sum\nolimits_{\,i<j}{\rm S}(\mD_i,\mD_j)$ on $M$, where
\[
 {\rm S}(\mD_i,\mD_j) = \sum\limits_{\,n_{\,i-1}<a\,\le n_i,\ n_{j-1}<b\le n_j}\eps_a\,\eps_b\<R_{E_a,{E}_b}\,E_a,\,{E}_{b}\> ,
\]
is called the \textit{mixed scalar curvature} (with respect to the Levi-Civita connection $\nabla$).
\end{definition}

\begin{proposition}[see \cite{r-IF-k} for $\I=0$] For any $g\in{\rm Riem}(M,\mD_1,\ldots\mD_k)$ we have
\begin{equation}\label{E-Dk-Smix}
  2\,\overline{\rm S}_{\,\mD_1,\ldots,\mD_k} = \sum\nolimits_{\,i}\overline{\rm S}_{\,\mD_i,\mD ^\bot_i}.
\end{equation}
\end{proposition}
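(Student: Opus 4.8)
The plan is to unpack both sides in a single adapted orthonormal frame and match the resulting sums; the statement is then pure bookkeeping. First I would record that, since the $\mD_j$ are pairwise orthogonal and non-degenerate with $\bigoplus_j\mD_j=TM$, the orthogonal complement satisfies $\mD_i^\bot=\bigoplus_{j\ne i}\mD_j$, and any local $g$-orthonormal frame $\{E_1,\dots,E_n\}$ adapted to $(\mD_1,\dots,\mD_k)$ in the sense used in the Definition is simultaneously adapted to the pair $(\mD_i,\mD_i^\bot)$ for each fixed $i$. At this point I would also fix the convention that $\overline{\rm S}_{\,\mD_i,\mD_i^\bot}$ denotes the symmetrized mixed scalar curvature of the pair $(\mD_i,\mD_i^\bot)$, i.e. the analogue of $\overline{\rm S}(\mD_i,\mD_j)$ with $\mD_j$ replaced by $\mD_i^\bot$; this is the correct substitute for ${\rm S}_{\,\mD,\mD^\bot}$ once $\bar\nabla$ is not the Levi-Civita connection, since then $\<\bar R_{X,Y}X,Y\>$ and $\<\bar R_{Y,X}Y,X\>$ need not agree and the symmetrization is what makes the quantity frame-independent.

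Next I would write $\overline{\rm S}_{\,\mD_i,\mD_i^\bot}=\tfrac12\sum_{a,b}\eps_a\eps_b\big(\<\bar R_{E_a,E_b}E_a,E_b\>+\<\bar R_{E_b,E_a}E_b,E_a\>\big)$ with $E_a$ running over the frame vectors in $\mD_i$ and $E_b$ over those in $\mD_i^\bot$, and split the $b$-range according to which block $\mD_j$ (with $j\ne i$) contains $E_b$. Comparing with the Definition, the inner summand of $\overline{\rm S}(\mD_i,\mD_j)$ is precisely this expression restricted to $E_b\in\mD_j$, so the block decomposition of $\mD_i^\bot$ yields $\overline{\rm S}_{\,\mD_i,\mD_i^\bot}=\sum_{j\ne i}\overline{\rm S}(\mD_i,\mD_j)$ with no cross terms between distinct $\mD_j$'s. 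I would also note the symmetry $\overline{\rm S}(\mD_i,\mD_j)=\overline{\rm S}(\mD_j,\mD_i)$, which is immediate: interchanging $i$ and $j$ amounts to interchanging the summation letters $a$ and $b$, under which the symmetrized summand $\<\bar R_{E_a,E_b}E_a,E_b\>+\<\bar R_{E_b,E_a}E_b,E_a\>$ is invariant.

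Finally I would sum the identity $\overline{\rm S}_{\,\mD_i,\mD_i^\bot}=\sum_{j\ne i}\overline{\rm S}(\mD_i,\mD_j)$ over $i$ and use the symmetry just noted to merge the contributions with $j<i$ and $j>i$, obtaining $\sum_i\overline{\rm S}_{\,\mD_i,\mD_i^\bot}=\sum_{i\ne j}\overline{\rm S}(\mD_i,\mD_j)=2\sum_{i<j}\overline{\rm S}(\mD_i,\mD_j)=2\,\overline{\rm S}_{\,\mD_1,\dots,\mD_k}$, which is \eqref{E-Dk-Smix}. There is no analytic obstacle here; the only point deserving care is the first paragraph — committing to the symmetrized meaning of $\overline{\rm S}_{\,\mD,\mD^\bot}$ for a general $\bar\nabla$ and checking that this symmetrization is compatible with the orthogonal block decomposition $\mD_i^\bot=\bigoplus_{j\ne i}\mD_j$ so that the double sums decompose cleanly.
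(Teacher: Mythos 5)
Your argument is correct and follows essentially the same route as the paper: decompose $\mD_i^\bot=\bigoplus_{j\ne i}\mD_j$ in an adapted frame so that $\overline{\rm S}_{\,\mD_i,\mD_i^\bot}=\sum_{j\ne i}\overline{\rm S}\,(\mD_i,\mD_j)$, then sum over $i$ and use the symmetry in $(i,j)$ to produce the factor $2$ against definition \eqref{E-Smix-k}. Your explicit care with the symmetrized convention for $\overline{\rm S}_{\,\mD_i,\mD_i^\bot}$ (needed since $\bar R$ lacks the pair symmetry of $R$) is a reasonable reading of what the paper leaves implicit, and the identity holds under either convention.
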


\begin{proof}
For any pair of distributions $(\mD_i,\mD_i^\bot)$ on $(M,g)$ we have
\begin{equation*}
 \overline{\rm S}_{\,\mD_i,\mD_i^\bot} = \sum\nolimits_{\,n_{\,i-1}<a\,\le n_i,\ b\ne(n_{\,i-1}, n_i]}\eps_a \eps_b\,\<\overline R_{\,E_a, E_b} E_a, E_b\>.
\end{equation*}
Thus, \eqref{E-Dk-Smix} directly follows from
$\overline{\rm S}_{\,\mD_i,\mD ^\bot_i}=\sum_{\,j\ne i}\overline{\rm S}_{\,\mD_i,\mD^\bot_j}$
and definition \eqref{E-Smix-k}.
\end{proof}

 The squares of norms of tensors are obtained using
\begin{eqnarray*}
 \<h_i,h_i\>=\sum\nolimits_{\,n_{\,i-1}<a,b\,\le n_i}\eps_a\eps_b\,\<h_i({E}_a,{E}_b),h_i({E}_a,{E}_b)\>, \\
 \<T_i,T_i\>=\sum\nolimits_{\,n_{\,i-1}<a,b\,\le n_i}\eps_a\eps_b\,\<T_i({E}_a,{E}_b),T_i({E}_a,{E}_b)\>.
\end{eqnarray*}
The ``musical" isomorphisms $\sharp$ and $\flat$ will be used for rank one and symmetric rank 2 tensors.
For~example, if $\omega \in\Lambda^1(M)$ is a 1-form and $X,Y\in {\mathfrak X}_M$ then
$\omega(Y)=\<\omega^\sharp,Y\>$ and $X^\flat(Y) =\<X,Y\>$.
For arbitrary (0,2)-tensors $B$ and $C$ we also have $\<B, C\> =\tr_g(B^\sharp C^\sharp)=\<B^\sharp, C^\sharp\>$.
The shape operator $(A_i)_Z$ of $\mD_i$ with $Z\in\mD_i^\bot$ and the operator $(T_i^\sharp)_{Z}$ are defined~by
\[
 \<(A_i)_Z(X),Y\>= \,h_i(X,Y),Z\>,\quad \<(T_i^\sharp)_Z(X),Y\>=\<T_i(X,Y),Z\>, \quad X,Y \in \mD_i .
\]
Similarly, operators $(A_i^\bot)_Z$ and $(T_i^{\bot\sharp})_Z$ with $Z\in\mD_i$ are defined.
The~following formula with the mixed scalar curvature ${\rm S}_{\,\mD,\mD^\bot}$ of $(M,g;\mD,\mD^\bot)$, see \cite{wa1}:
\begin{equation}\label{E-PW0}
 \Div(H+H^\bot) ={\rm S}_{\,\mD,\mD^\bot} +\<h,h\>+\<h^\bot,h^\bot\> -\<H^\bot,H^\bot\>-\<H,H\> -\<T,T\>-\<T^\bot,T^\bot\>
\end{equation}
plays a key role in this work. This can be written shortly as
\begin{equation}\label{E-PW}
 \Div(H+H^\bot) ={\rm S}_{\,\mD,\mD^\bot} -Q(\mD,g)
\end{equation}
using the auxiliary function
\begin{equation}\label{E-func-Q}
 Q(\mD,g) = \<H^\bot,H^\bot\>+\<H,H\> -\<h,h\>-\<h^\bot,h^\bot\> +\<T,T\>+\<T^\bot,T^\bot\> .
\end{equation}
Set $V(\mD)=(\mD\times\mD^\bot)\cup(\mD^\bot\times\mD)$. For a metric-affine manifold we have, see \cite[Lemma 2]{r-affine},
\begin{equation}\label{E-div-barQ}
 \frac12\Div \big(P(\tr_{\,\mD^\bot}(\I -\I^*)) +P^\bot(\tr_{\,\mD}(\I -\I^*)) \big)
 = \bar{\rm S}_{\,\mD,\mD^\bot} - {\rm S}_{\,\mD,\mD^\bot} -\bar Q(\mD,g,\I),
\end{equation}
where
the auxiliary function $\bar Q(\mD,g,\I)$ is given by
\begin{eqnarray}\label{E-barQ}
\nonumber
  2\,\bar Q(\mD,g,\I) = \<\tr_{\,\mD}\I,\,\tr_{\,\mD^\bot}\I^*\> +\<\tr_{\,\mD^\bot}\I,\,\tr_{\,\mD}\I^*\> -\<\I^*,\,\I^\wedge\>_{\,|\,V(\mD)} \\
  +\,\<\tr_{\,\mD}(\I- \I^*) -\tr_{\,\mD^\bot}(\I -\I^*),\, H -H^\bot\>
   +\<\Theta,\ A^\bot -T^{\bot\sharp} + A -T^{\sharp}\> ,
\end{eqnarray}
and
\[
 \Theta = \I -\I^* +\I^\wedge - \I^{* \wedge}.
\]
For instance, if~$\mD$ is spanned by a unit vector field $N$, i.e., $\<N,N\>=\eps_N\in\{-1,1\}$,
then ${\rm S}_{\,\mD,\mD^\bot}=\eps_N{\rm Ric}_{N,N}$, where ${\rm Ric}_{N,N}$ is the Ricci curvature in the $N$-direction.

\begin{remark}\rm
In a local adapted frame, the last term in \eqref{E-barQ} and $\<\I^*,\I^\wedge\>_{\,|\,V(\mD)}$ have the view
\begin{eqnarray*}
   \<\Theta,\ A^\bot -T^{\bot\sharp} + A -T^{\sharp}\>
 = \sum\nolimits_{a\,\le n_1,\,b>n_1}\eps_a\eps_b\,\big(\,\<(\I_{E_b} -\I^*_{E_b}) E_a \\
 +\,(\I_{E_a} -\I^*_{E_a}) {E}_b,\ (A^\bot_{E_a}-T^{\bot\sharp}_{E_a}){E}_b +({A}_{E_b}-T^{\sharp}_{E_b})E_a\>\big) ,\\
  \<\I^*,\ \I^\wedge\>_{\,|\,V(\mD)} = \sum\nolimits_{a\,\le n_1,\,b>n_1}
 \big(\<\I_{E_a} {E}_b,\,\I^*_{E_b} E_a\> + \<\I^*_{E_a} {E}_b,\,\I_{E_b} E_a\>\,\big).
\end{eqnarray*}
\end{remark}

The \textit{divergence} of a vector field $X\in\mathfrak{X}_M$ is given by, e.g., \cite{besse},
\begin{equation}\label{Eq-div}
 (\Div X)\,{\rm d}\vol_g = {\cal L}_{X}({\rm d}\vol_g),
\end{equation}
where ${\rm d} \vol_g$ is the volume form of $g$.
Thus,
$\Div X=\tr(\nabla X)=\Div_{\,i} X+\Div_{\,i}^\bot X$, where
\[
 \Div_{\,i} X =\sum\nolimits_{n_{\,i-1}<a\,\le n_i}\eps_a\,\<\nabla_{E_a}\,X, {E}_a\>,\quad
 \Div_{\,i}^\bot X=\sum\nolimits_{b\ne(n_{\,i-1}, n_i]}\eps_b\,\<\nabla_{E_b}\,X, E_b\> .
\]
Observe that for $X\in\mD_i$ we have
\begin{equation}\label{E-divN}
 {\Div}_i^\bot X = \Div X +\<X,\,H_i^\bot\>.
\end{equation}
More generally, for a $(1,2)$-tensor $P$ we define a $(0,2)$-tensor ${\Div}_i^\bot P$ by
\[
 ({\Div}_i^\bot\,P)(X,Y) = \sum\nolimits_{b\ne(n_{\,i-1}, n_i]}\eps_b\,\<(\nabla_{E_b}\,P)(X,Y), E_b\>,\quad X,Y \in \mathfrak{X}_M.
\]
For a~$\mD_i$-valued $(1,2)$-tensor $P$ we have
${\Div}_i^\bot\,P = \Div P+\<P,\,H_i\>$, where
\begin{equation*}
 ({\Div}_{\,i}\,P)(X,Y) =\sum\nolimits_{n_{\,i-1}<a\,\le n_i}\eps_a\,\<(\nabla_{E_a}\,P)(X,Y), E_a\> = -\<P(X,Y), H_i\>,
\end{equation*}
and $\<P,\,H_i\>(X,Y)=\<P(X,Y),\,H_i\>$ is a $(0,2)$-tensor.
For example, $\Div_{\,i}^\bot h_i = \Div h_i+\<h_i,\,H_i\>$.
 Modifying Divergence theorem on a complete open manifold $(M,g)$ yields the following.

\begin{lemma}[see Proposition~1 in \cite{csc2010}]\label{L-Div-1}
Let $(M^n,g)$ be a complete open Riemannian manifold endowed with a vector field $\xi$
such that $\Div\xi\ge0$. If the norm $\|\xi\|_g\in{\rm L}^1(M,g)$ then $\Div\xi\equiv0$.
\end{lemma}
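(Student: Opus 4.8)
The plan is to establish the claimed dichotomy by a contradiction argument combined with a cutoff-function technique for the Divergence Theorem on complete open manifolds. First I would assume, for contradiction, that $\Div\xi$ is not identically zero; since $\Div\xi\ge0$ by hypothesis and $\Div\xi$ is continuous, there is then a nonempty open set $U\subset M$ and a constant $c>0$ with $\Div\xi\ge c$ on $U$, so that $\int_U\Div\xi\,{\rm d}\vol_g>0$. The strategy is to show this forces $\int_M\Div\xi\,{\rm d}\vol_g$ to be strictly positive, while an integration by parts using $\|\xi\|_g\in{\rm L}^1(M,g)$ and completeness forces it to vanish.

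The second step is the standard exhaustion/cutoff construction. Fix a point $o\in M$ and let $r(x)={\rm dist}(o,x)$; by completeness the closed balls $B_R=\{r\le R\}$ are compact and exhaust $M$. Choose Lipschitz cutoff functions $\phi_R:M\to[0,1]$ with $\phi_R\equiv1$ on $B_R$, $\phi_R\equiv0$ outside $B_{2R}$, and $\|\nabla\phi_R\|_g\le 2/R$ almost everywhere. Applying the Divergence Theorem to the compactly supported (Lipschitz) vector field $\phi_R\,\xi$ gives
\[
 \int_M \phi_R\,\Div\xi\,{\rm d}\vol_g = -\int_M \<\nabla\phi_R,\,\xi\>\,{\rm d}\vol_g .
\]
The right-hand side is bounded in absolute value by $\frac{2}{R}\int_{B_{2R}\setminus B_R}\|\xi\|_g\,{\rm d}\vol_g \le \frac{2}{R}\,\|\xi\|_{{\rm L}^1(M,g)}$, which tends to $0$ as $R\to\infty$. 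On the left-hand side, since $\phi_R\,\Div\xi\ge0$ and $\phi_R\uparrow 1$ pointwise, the Monotone Convergence Theorem gives $\int_M\phi_R\,\Div\xi\,{\rm d}\vol_g\to\int_M\Div\xi\,{\rm d}\vol_g$. Hence $\int_M\Div\xi\,{\rm d}\vol_g=0$, which together with $\Div\xi\ge0$ and continuity yields $\Div\xi\equiv0$, contradicting our assumption and completing the proof.

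The main obstacle, and the point requiring the most care, is the justification that $\phi_R\,\xi$ is an admissible vector field for the Divergence Theorem and that all the limits are legitimate: one needs $\xi$ to be at least $C^1$ (or $W^{1,1}_{\rm loc}$) so that $\Div\xi$ makes classical sense, and the cutoffs $\phi_R$ are only Lipschitz, so the identity $\Div(\phi_R\xi)=\phi_R\Div\xi+\<\nabla\phi_R,\xi\>$ must be read in the sense of distributions or via a further smoothing of $\phi_R$. The ${\rm L}^1$ hypothesis on $\|\xi\|_g$ is exactly what kills the boundary term, and the one-sided sign condition $\Div\xi\ge0$ is what lets Monotone Convergence replace the more delicate Dominated Convergence (we do not assume $\Div\xi\in{\rm L}^1$ a priori — indeed the conclusion shows it is, being zero). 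Everything else is routine; I would simply cite the construction of such cutoffs on a complete Riemannian manifold and refer to \cite{csc2010} for the version of the Divergence Theorem being invoked.
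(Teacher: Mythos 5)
Your argument is correct, and it is worth noting that the paper itself offers no proof of this lemma: it is quoted verbatim from Proposition~1 of \cite{csc2010}, where it is deduced from Yau's 1976 lemma on integrable $(n-1)$-forms, applied to $\omega=\iota_\xi\,{\rm d}\vol_g$ (so that ${\rm d}\omega=(\Div\xi)\,{\rm d}\vol_g$ and $\|\omega\|=\|\xi\|_g$). Your cutoff-and-exhaustion argument is essentially a self-contained proof of that underlying lemma: the Lipschitz cutoffs $\phi_R$ with $\|\nabla\phi_R\|_g\le 2/R$, the identity $\int_M\phi_R\,\Div\xi\,{\rm d}\vol_g=-\int_M\<\nabla\phi_R,\xi\>\,{\rm d}\vol_g$ (valid for compactly supported Lipschitz fields, or after smoothing $\phi_R$), the bound $\tfrac2R\|\xi\|_{{\rm L}^1}\to0$, and monotone convergence on the left give $\int_M\Div\xi\,{\rm d}\vol_g=0$, whence $\Div\xi\equiv0$ by continuity and nonnegativity. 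What your route buys is elementarity and self-containment; what the citation buys is brevity and a slightly more general statement ($\Div\xi$ merely of constant sign, which your proof also covers by replacing $\xi$ with $-\xi$). Two cosmetic remarks: the opening contradiction frame is superfluous, since your computation directly yields $\int_M\Div\xi\,{\rm d}\vol_g=0$; and if one prefers not to check monotonicity of the family $\phi_R$, Fatou's lemma serves in place of monotone convergence. In fact the sharper estimate $\big|\int_M\<\nabla\phi_R,\xi\>\,{\rm d}\vol_g\big|\le \tfrac2R\int_{M\setminus B_R}\|\xi\|_g\,{\rm d}\vol_g$ shows the boundary term vanishes already because the tail of an ${\rm L}^1$ function does, without using the factor $1/R$.
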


\begin{definition}[see \cite{r-IF-k}]
The symmetric $(0,2)$-tensor $r(X,Y) = \frac12\sum\nolimits_{\,i} r_{\,\mD_i}(X,Y)$,
where
\begin{equation}\label{E-Rictop2}
 r_{\,\mD_i}(X,Y) = \sum\nolimits_{n_{\,i-1}<a\,\le n_i} \eps_a\, \<R_{\,E_a,\,P_i^\bot\,X}\,E_a, \, P_i^\bot\,Y\>, \quad X,Y\in \mathfrak{X}_M ,
\end{equation}
will be called the \textit{partial Ricci tensor} of $(M,g;\mD_1,\ldots,\mD_k)$ with respect to $\nabla$.
\end{definition}

Similarly to the scalar curvature, the mixed scalar curvature can be seen as the trace:
\[
 {\rm S}_{\,\mD_i,\mD^\bot_i} = \tr_g\,r_{\,\mD_i},\quad
 {\rm S}_{\,\mD_1,\ldots,\mD_k} =\tr_g\,r.
\]
The~partial Ricci tensor in \eqref{E-Rictop2} is presented in the fundamental equality, see \cite{bdr,rz-1},
\begin{equation}\label{E-genRicN}
 r_{\,\mD_i} = \Div h_i +\<h_i,\,H_i\>-{\cal A}_i^\flat-{\cal T}_i^\flat-\Psi_i^\bot+{\rm Def}_{\,\mD_i^\bot}\,H_i^\bot ,
\end{equation}
where the $\mD_i$-\textit{deformation tensor} ${\rm Def}_{\,\mD_i}Z$ of $Z\in\mathfrak{X}_M$
is the symmetric part of $\nabla Z$ restricted to~$\mD_i$,
\begin{equation*}
 2\,{\rm Def}_{\,\mD_i}\,Z(X,Y)=\<\nabla_X Z, Y\> +\<\nabla_Y Z, X\>,\quad X,Y\in \mD_i,
\end{equation*}
and
$(1,1)$-tensors ${\cal A}_i$ (the \textit{Casorati type operator})
and ${\cal T}_i$ and the symmetric $(0,2)$-tensor $\Psi_i$, see \cite{bdr,rz-1}, are defined using operators $A_i$ and $T_i$ by
\begin{eqnarray*}
 && {\cal A}_i=\sum\nolimits_{\,n_{\,i-1}<a\,\le n_i}\eps_a (A_i)_{E_a}^2,\quad
 {\cal T}_i=\sum\nolimits_{\,n_{\,i-1}<a\,\le n_i}\eps_a(T_i^\sharp)_{E_a}^2,\\
 && \Psi_i(X,Y) = \tr((A_i)_Y (A_i)_X+(T_i^\sharp)_Y (T_i^\sharp)_X), \quad X,Y\in\mD_i^\bot.
\end{eqnarray*}
One may derive \eqref{E-PW0} with $\mD=\mD_i$, tracing \eqref{E-genRicN} over $\mD_i$ and applying the equalities
\begin{eqnarray*}
 && \tr_{g}\,({\Div}\,h_i) =\Div H_i,\quad
 \tr\<h_i,\,H_i\> = \<H_i,H_i\>,\quad
 \tr_{g}\Psi_i^\bot =\<h_i^\bot,h_i^\bot\> - \<T_i^\bot,T_i^\bot\>,\\
 && \tr{\cal A}_i = \<h_i,h_i\>,\quad
 \tr{\cal T}_i = -\<T_i,T_i\>,\quad
 \tr_{g}\,({\rm Def}_{\,\mD_i^\bot}\,H_i^\bot) = \Div H_i^\bot +g(H_i^\bot, H_i^\bot) .
\end{eqnarray*}

\section{Integral formulas}
\label{sec:if}

Here, we deduce integral formulas and apply them to obtain splitting results for almost multi-product manifolds with $g>0$.
Define the partial traces of a contorsion tensor $\,\I$ by
\begin{equation*}
 \tr_{\,\mD_i^\bot}\I = \sum\nolimits_{\,b\ne(n_{\,i-1}, n_i]}\eps_b\, \I_{E_b} {E}_b,\quad
 \tr_{\,\mD_i}\I = \sum\nolimits_{\,n_{\,i-1}<a\,\le n_i}\eps_a\, \I_{E_a} \,{E}_{E_a}.
\end{equation*}
The following result generalizes \eqref{E-PW} for arbitrary $k$ (see also \cite{r-IF-k}) and any linear connection $\bar\nabla$.

\begin{proposition}
For a metric-affine almost multi-product manifold $(M,g;\mD_1,\ldots,\mD_k)$ with a linear connection $\bar\nabla=\nabla+\I$ we~have
\begin{eqnarray}\label{E-Q1Q2-gen}
\nonumber
 && \Div\sum\nolimits_{\,i}\Big(\frac12\,\big(P_i(\tr_{\,\mD_i^\bot}(\I -\I^*)) +P_i^\bot(\tr_{\,\mD_i}(\I -\I^*))\big)
 +H_{\,i}+H^\bot_{\,i}\Big) \\
 && = 2\,\bar{\rm S}_{\,\mD_1,\ldots,\mD_k} - \sum\nolimits_{\,i}\big(\bar Q(\mD_i,g,\I) + Q(\mD_i,g)\big),
\end{eqnarray}
where $Q(\mD_i,g)$ is given in \eqref{E-func-Q} with $\mD=\mD_i$
and $\bar Q(\mD_i,g,\I)$ is given in \eqref{E-barQ} with $\mD=\mD_i$.
In~particular, for a Riemannian almost multi-product manifold $(M,g;\mD_1,\ldots,\mD_k)$ we~have
\begin{equation}\label{E-PW3-k}
 \Div \sum\nolimits_{\,i} (H_{\,i}+H^\bot_{\,i}) = 2\,{\rm S}_{\,\mD_1,\ldots,\mD_k} - \sum\nolimits_{\,i}Q(\mD_i,g) .
\end{equation}
\end{proposition}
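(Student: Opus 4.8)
The plan is to reduce the multi-distribution identity \eqref{E-Q1Q2-gen} to a sum of the known pairwise identities \eqref{E-div-barQ}, one for each index $i$, using the pair $(\mD_i,\mD_i^\bot)$. First I would write down \eqref{E-div-barQ} with $\mD=\mD_i$ for every $i=1,\ldots,k$; this gives
\[
 \tfrac12\Div\big(P_i(\tr_{\,\mD_i^\bot}(\I -\I^*)) +P_i^\bot(\tr_{\,\mD_i}(\I -\I^*))\big)
 = \bar{\rm S}_{\,\mD_i,\mD_i^\bot} - {\rm S}_{\,\mD_i,\mD_i^\bot} -\bar Q(\mD_i,g,\I).
\]
Next I would add the classical Walczak formula \eqref{E-PW}, again with $\mD=\mD_i$, namely
$\Div(H_i+H_i^\bot) = {\rm S}_{\,\mD_i,\mD_i^\bot} - Q(\mD_i,g)$; adding these two for a fixed $i$ the term ${\rm S}_{\,\mD_i,\mD_i^\bot}$ cancels, leaving
\[
 \Div\Big(\tfrac12\big(P_i(\tr_{\,\mD_i^\bot}(\I -\I^*)) +P_i^\bot(\tr_{\,\mD_i}(\I -\I^*))\big) +H_i+H_i^\bot\Big)
 = \bar{\rm S}_{\,\mD_i,\mD_i^\bot} -\bar Q(\mD_i,g,\I) - Q(\mD_i,g).
\]

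Then I would sum this last equality over $i=1,\ldots,k$. The left-hand side becomes exactly the divergence appearing in \eqref{E-Q1Q2-gen}, since divergence is linear. On the right-hand side, the $\bar Q$ and $Q$ terms assemble into $-\sum_i(\bar Q(\mD_i,g,\I)+Q(\mD_i,g))$ as claimed, so the only remaining point is to identify $\sum_i \bar{\rm S}_{\,\mD_i,\mD_i^\bot}$ with $2\,\bar{\rm S}_{\,\mD_1,\ldots,\mD_k}$. But this is precisely equation \eqref{E-Dk-Smix} of the Proposition proved above, which holds for any linear connection $\bar\nabla$. Substituting gives \eqref{E-Q1Q2-gen}. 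For the Riemannian special case \eqref{E-PW3-k}, set $\I=0$: then $\I^*=0$, so the partial-trace terms vanish, the bar-quantities reduce to their Levi-Civita counterparts ($\bar{\rm S}_{\,\mD_1,\ldots,\mD_k}={\rm S}_{\,\mD_1,\ldots,\mD_k}$ and $\bar Q(\mD_i,g,0)=0$), and \eqref{E-Q1Q2-gen} collapses to \eqref{E-PW3-k}.

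The computation is essentially bookkeeping, so there is no deep obstacle; the one place to be careful is the consistent use of the decomposition $\mD_i^\bot=\bigoplus_{j\ne i}\mD_j$ together with the identity $\bar{\rm S}_{\,\mD_i,\mD_i^\bot}=\sum_{j\ne i}\bar{\rm S}_{\,\mD_i,\mD_j}$ already recorded in the proof of the Proposition, which is what makes the factor $2$ appear (each unordered pair $\{i,j\}$ is counted once from the $i$-term and once from the $j$-term). One should also note that \eqref{E-div-barQ} and \eqref{E-PW} are stated for a single pair and are invariant under the choice of adapted orthonormal frame, so applying them to each $(\mD_i,\mD_i^\bot)$ and summing is legitimate; the partial traces $\tr_{\,\mD_i^\bot}\I$ and $\tr_{\,\mD_i}\I$ in the statement are exactly the $\mD=\mD_i$ instances of the quantities in \eqref{E-div-barQ}, so no relabeling is needed.
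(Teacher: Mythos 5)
Your proposal is correct and follows essentially the same route as the paper: apply \eqref{E-div-barQ} and \eqref{E-PW} with $\mD=\mD_i$, sum over $i$, and use \eqref{E-Dk-Smix} to produce the factor $2$; the only cosmetic difference is that you add the two pairwise identities for each fixed $i$ before summing (and obtain \eqref{E-PW3-k} as the $\I=0$ special case), whereas the paper sums each family over $i$ first and then adds. Both orderings are equivalent bookkeeping, so nothing is missing.
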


\begin{proof}
Summing $k$ copies of \eqref{E-PW} with $\mD=\mD_i\ (i=1,\ldots,k)$, and using \eqref{E-Dk-Smix} yields \eqref{E-PW3-k}.
Summing $k$ copies of \eqref{E-div-barQ} with $\mD=\mD_i\ (i=1,\ldots,k)$ and using \eqref{E-Dk-Smix} yields
\begin{eqnarray}\label{E-Q2-gen}
\nonumber
 && \frac12\,\Div\sum\nolimits_{\,i}\big(P_i(\tr_{\,\mD_i^\bot}(\I -\I^*)) +P_i^\bot(\tr_{\,\mD_i}(\I -\I^*)) \big) \\
 && = 2\,\bar{\rm S}_{\,\mD_1,\ldots,\mD_k} - 2\,{\rm S}_{\,\mD_1,\ldots,\mD_k} - \sum\nolimits_{\,i}\bar Q(\mD_i,g,\I).
\end{eqnarray}
Summing \eqref{E-PW3-k} and \eqref{E-Q2-gen} yields \eqref{E-Q1Q2-gen}.
\end{proof}

\begin{theorem}
For a closed manifold $M$ with a metric-affine almost multi-product structure the following integral~formula~holds:
\begin{equation}\label{E-int-k2}
 \int_M\big[ 2\,\overline{\rm S}_{\,\mD_1,\ldots,\mD_k} -\sum\nolimits_{\,i} (Q(\mD_i,g) +\bar Q(\mD_i,g,\I)) \big]\,{\rm d}\vol_g =0\,.
\end{equation}
In~particular, for a closed manifold $M$ with a Riemannian almost multi-product structure we have
\begin{eqnarray}\label{E-int-k}
 \int_M\big(\,2\,{\rm S}_{\,\mD_1,\ldots,\mD_k} - \sum\nolimits_{\,i}Q(\mD_i,g)\,\big)\,{\rm d}\vol_g =0\,.
\end{eqnarray}
\end{theorem}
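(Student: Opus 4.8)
The plan is to deduce the integral formula directly from the pointwise divergence identity \eqref{E-Q1Q2-gen} established in the preceding Proposition. The only analytic input needed is the classical Divergence Theorem for closed (compact, boundaryless) manifolds, which asserts that $\int_M \Div X\,{\rm d}\vol_g = 0$ for every smooth vector field $X$ on $M$; this follows immediately from \eqref{Eq-div} and Stokes' theorem, since ${\cal L}_X({\rm d}\vol_g) = {\rm d}(\iota_X{\rm d}\vol_g)$ is exact.

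First I would take the vector field
\[
 X = \sum\nolimits_{\,i}\Big(\tfrac12\,\big(P_i(\tr_{\,\mD_i^\bot}(\I -\I^*)) + P_i^\bot(\tr_{\,\mD_i}(\I -\I^*))\big) + H_{\,i} + H^\bot_{\,i}\Big),
\]
which is a globally well-defined smooth vector field on $M$ because, as noted after the Definition in Section~1, all quantities built from the adapted orthonormal frame are frame-independent, hence patch together globally; moreover $\I$ is a globally defined $(1,2)$-tensor and the orthoprojectors $P_i,P_i^\bot$ are smooth. Then I would apply the Divergence Theorem to $X$, so that $\int_M \Div X\,{\rm d}\vol_g = 0$. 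Substituting the right-hand side of \eqref{E-Q1Q2-gen} for $\Div X$ gives exactly
\[
 \int_M\Big[ 2\,\overline{\rm S}_{\,\mD_1,\ldots,\mD_k} - \sum\nolimits_{\,i}\big(Q(\mD_i,g) + \bar Q(\mD_i,g,\I)\big)\Big]\,{\rm d}\vol_g = 0,
\]
which is \eqref{E-int-k2}. For the particular case, I would instead start from the Riemannian identity \eqref{E-PW3-k}: apply the Divergence Theorem to $\sum_i (H_{\,i} + H^\bot_{\,i})$ and integrate \eqref{E-PW3-k} to obtain \eqref{E-int-k}. Alternatively, \eqref{E-int-k} is the specialization of \eqref{E-int-k2} obtained by setting $\I = 0$, since then $\overline{\rm S}_{\,\mD_1,\ldots,\mD_k} = {\rm S}_{\,\mD_1,\ldots,\mD_k}$ and $\bar Q(\mD_i,g,0) = 0$ (every term in \eqref{E-barQ} is linear or quadratic in $\I$), though one should remark that \eqref{E-int-k} does not require $g$ to be Riemannian beyond what is used for the orthonormal-frame bookkeeping.

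There is essentially no obstacle here: the content of the theorem is entirely carried by the pointwise formula \eqref{E-Q1Q2-gen}, and the passage to the integral formula is the routine application of Stokes' theorem on a closed manifold. The only point meriting a sentence of care is the global smoothness and well-definedness of the vector field whose divergence is being integrated — but this is guaranteed by the frame-independence already asserted in Section~1 together with the smoothness of $g$, $\bar\nabla$ (hence $\I$), and the distributions $\mD_i$. I would therefore keep the proof to two or three lines, merely naming the Divergence Theorem and citing \eqref{E-Q1Q2-gen} and \eqref{E-PW3-k}.
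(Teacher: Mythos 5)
Your proposal is correct and follows exactly the paper's proof: the paper also obtains \eqref{E-int-k2} and \eqref{E-int-k} by applying the Divergence Theorem to the identities \eqref{E-Q1Q2-gen} and \eqref{E-PW3-k}, respectively. Your extra remarks on global well-definedness of the vector field and the alternative specialization $\I=0$ are fine but not needed.
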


\begin{proof}
Using Divergence Theorem for \eqref{E-Q1Q2-gen} and \eqref{E-PW3-k} yields \eqref{E-int-k2} and \eqref{E-int-k}, respectively.
\end{proof}

If $\bar\nabla$ is a statistical connection
then $(M,g,\bar\nabla;\mD_1,\ldots,\mD_k)$ is called a \textit{statistical almost multi-product manifold}.

\begin{corollary}\label{C-stat}\rm
For a statistical almost multi-product manifold we have
\[
 2\,\bar Q(\mD_i,g,\I)= 2\,\<\tr_{\,\mD_i}\I,\ \tr_{\,\mD_i^\bot}\I\> -\<\I,\,\I\>_{\,|\,V(\mD_i)}\,;
\]
thus, \eqref{E-Q2-gen} reduces to the equality
\begin{equation}\label{eqvarstat}
 2\,\bar{\rm S}_{\,\mD_1,\ldots,\mD_k} -2\,{\rm S}_{\,\mD_1,\ldots,\mD_k}
 - \!\sum\nolimits_{\,i}\!\big(\<\tr_{\,\mD_i^\bot}\I,\,\tr_{\,\mD_i}\I\> -\frac12\,\<\I,\,\I\>_{\,|V(\mD_i)}\big) = 0,
\end{equation}
and for a closed manifold $M$, \eqref{E-int-k2} reduces to the following integral formula:
\begin{equation*}
 \int_M\!\big[ 2\,\overline{\rm S}_{\,\mD_1,\ldots,\mD_k} -\sum\nolimits_{\,i} \!\big(Q(\mD_i,g) + \<\tr_{\,\mD_i}\I, \tr_{\,\mD_i^\bot}\I\>
 -\frac12\<\I,\,\I\>_{\,|V(\mD_i)} \big) \big]{\rm d}\vol_g \!=0.
\end{equation*}
\end{corollary}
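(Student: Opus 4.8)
The plan is to specialize the general divergence identity behind \eqref{E-Q2-gen} and the expression \eqref{E-barQ} for $\bar Q(\mD_i,g,\I)$ to the statistical case, where by definition $\I^\wedge=\I$ and $\I^*=\I$ (equivalently, the cubic form $A(X,Y,Z)=\<\I_XY,Z\>$ is symmetric in all three arguments). First I would record the elementary consequences of these identities for the tensors occurring in \eqref{E-barQ}. The partial traces satisfy $\tr_{\,\mD_i}\I^*=\tr_{\,\mD_i}\I$ and $\tr_{\,\mD_i^\bot}\I^*=\tr_{\,\mD_i^\bot}\I$, so that $\tr_{\,\mD_i}(\I-\I^*)=\tr_{\,\mD_i^\bot}(\I-\I^*)=0$. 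Moreover $\I^{*\wedge}=\I$: indeed $\I^{*\wedge}_XY=\I^*_YX=\I_YX=\I^\wedge_XY=\I_XY$. Hence $\Theta=\I-\I^*+\I^\wedge-\I^{*\wedge}=0$.

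Substituting these facts into \eqref{E-barQ}, the fourth term (the pairing with $H-H^\bot$) vanishes because $\tr_{\,\mD_i}(\I-\I^*)-\tr_{\,\mD_i^\bot}(\I-\I^*)=0$, the fifth term (the pairing involving $\Theta$) vanishes because $\Theta=0$, the first two terms collapse to $2\,\<\tr_{\,\mD_i}\I,\,\tr_{\,\mD_i^\bot}\I\>$ by $\I^*=\I$ and symmetry of $\<\cdot,\cdot\>$, and $\<\I^*,\I^\wedge\>_{\,|\,V(\mD_i)}=\<\I,\I\>_{\,|\,V(\mD_i)}$. This yields the first claim, $2\,\bar Q(\mD_i,g,\I)=2\,\<\tr_{\,\mD_i}\I,\,\tr_{\,\mD_i^\bot}\I\>-\<\I,\I\>_{\,|\,V(\mD_i)}$, whence $\bar Q(\mD_i,g,\I)=\<\tr_{\,\mD_i}\I,\,\tr_{\,\mD_i^\bot}\I\>-\tfrac12\<\I,\I\>_{\,|\,V(\mD_i)}$.

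For the reduction of \eqref{E-Q2-gen} I would note that, since $\I^*=\I$, the argument of the divergence on the left-hand side of \eqref{E-Q2-gen} is identically zero, so that identity becomes $2\,\bar{\rm S}_{\,\mD_1,\ldots,\mD_k}-2\,{\rm S}_{\,\mD_1,\ldots,\mD_k}-\sum_i\bar Q(\mD_i,g,\I)=0$; inserting the value of $\bar Q(\mD_i,g,\I)$ obtained above gives exactly \eqref{eqvarstat}. Finally, for a closed manifold $M$ one simply substitutes the same expression for $\bar Q(\mD_i,g,\I)$ into the integral formula \eqref{E-int-k2}, which produces the stated integral identity.

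The argument is purely algebraic once the symmetries $\I^*=\I^\wedge=\I$ are available; the only step needing a moment's care is the verification that $\I^{*\wedge}=\I$, hence $\Theta=0$, which is where full symmetry of the cubic form is genuinely used. I anticipate no further obstacle.
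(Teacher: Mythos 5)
Your proof is correct and follows exactly the route the paper intends (the corollary is stated without a written proof): substitute the statistical symmetries $\I^*=\I^\wedge=\I$ into \eqref{E-barQ}, which kills the $\Theta$-term and the $H-H^\bot$ pairing and collapses the trace terms, then note the divergence argument in \eqref{E-Q2-gen} vanishes since $\I-\I^*=0$, and insert the resulting $\bar Q(\mD_i,g,\I)$ into \eqref{E-int-k2}. No gaps.
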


\begin{example}\rm
Let $(M^n,g)$ admits $n$ pairwise orthogonal codimension-one foliations ${\cal F}_i$, and let there exist unit vector fields $N_i$ orthogonal to ${\cal F}_i$.
Let $\sigma_k({\cal F}_i)$ be elementary symmetric functions of principal curvatures of the leaves of ${\cal F}_i$.
Writing down \eqref{E-sigma2} for each $N_i$, summing for $i=1,\ldots,n$ and using ${\rm S}=\sum_{\,i}{\rm Ric}_{N_i,N_i}$, yields the formula
with the scalar curvature {\rm S} of~$(M,g)$,
\begin{equation*}\label{E-sigma2-k}
 \int_M \big(\,2\sum\nolimits_{\,i}\sigma_2({\cal F}_i)-{\rm S}\,\big)\,{\rm d}\vol_g=0,
\end{equation*}
which also follows from a special case of \eqref{E-PW3-k} when $n_i=1$ and $k=n$.
We immediately have two consequences of \eqref{E-sigma2-k}:
a)~if the scalar curvature is nonpositive and not identically zero then each foliation ${\cal F}_i$ cannot be totally umbilical;
b)~if the scalar curvature is nonnegative and not identically zero then each ${\cal F}_i$ cannot be harmonic (i.e., with zero mean curvature of the leaves).
\end{example}

 We say that $(M,g;\mD_1,\ldots,\mD_k)$ \textit{splits} if all distributions  $\mD_i$ are integrable and
$M$ is locally the direct product $M_1\times\ldots\times M_k$ with canonical foliations tangent to $\mD_i$.
It~is well known that if a simply connected manifold splits then it is the direct~product.

We apply the submanifolds theory to
almost multi-product manifolds.

\begin{definition}\rm
A pair $(\mD_i,\mD_j)$ with $i\ne j$ of distributions on
$(M,g;\mD_1,\ldots,\mD_k)$ with $k>2$ is

a) \textit{mixed totally geodesic}, if $h_{\,ij}(X,Y)=0$ for all $X\in\mD_i$ and $Y\in\mD_j$.

b) \textit{mixed integrable}, if $T_{\,ij}(X,Y)=0$ for all $X\in\mD_i$ and $Y\in\mD_j$.
\end{definition}

\begin{lemma}[see \cite{r-IF-k}]\label{L-mixed-YU-TG}
If each pair $(\mD_i,\mD_j)$ with $i\ne j$ on $(M,g;\mD_1,\ldots,\mD_k)$ with $k>2$ is

\smallskip

a$)$ {mixed totally geodesic}, then
$h_{q_1,\ldots,q_r}(X,Y)=0$,

b$)$ {mixed integrable}, then
$T_{q_1,\ldots,q_r}(X,Y)=0$,

\smallskip\noindent
where ${q_1,\ldots,q_r}$ is any subset of $r$ distinct elements of $\{1,\ldots, k\}$
and $X\in\mD_{q(1)},\,Y\in\mD_{q(2)}$.
\end{lemma}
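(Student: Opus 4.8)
The plan is to reduce the multi-distribution statement to the known two-distribution situation by a careful bookkeeping argument on the projections $P_i$. Recall that for distinct indices $i,j$ the ``partial'' second fundamental form $h_{ij}\colon\mD_i\times\mD_j\to\mD_k^{\,\bot}$ (or rather its $\mD_j$-component, $\mD_i$-component, etc.) is extracted from the full $h_i$ or $h_j$ by composing with the orthoprojector onto the relevant summand; the hypothesis ``$(\mD_i,\mD_j)$ is mixed totally geodesic'' says exactly that $P_j(\nabla_X Y+\nabla_Y X)=0$ and $P_i(\nabla_X Y+\nabla_Y X)=0$ for $X\in\mD_i,\ Y\in\mD_j$, i.e.\ the $\mD_i$- and $\mD_j$-components of $h_i,h_j$ vanish on mixed pairs. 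Symmetrically, mixed integrability says $P_j[X,Y]=0$ and $P_i[X,Y]=0$ for such $X,Y$. The claim to be proved, in effect, is that once this holds for \emph{every} unordered pair, the component $h_{q_1,\dots,q_r}(X,Y)$ — which should be read as the $\mD_{q_s}$-component ($s\ge3$) of the second fundamental form of $\mD_{q_1}\oplus\cdots\oplus\mD_{q_r}$ evaluated on $X\in\mD_{q(1)},\ Y\in\mD_{q(2)}$ — also vanishes, and likewise for $T$.

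First I would fix the subset $\{q_1,\dots,q_r\}$ and vectors $X\in\mD_{q(1)}$, $Y\in\mD_{q(2)}$, and decompose $\nabla_X Y$ (and $\nabla_Y X$) according to $TM=\bigoplus_{m=1}^k\mD_m$. The quantity $h_{q_1,\dots,q_r}(X,Y)$ collects the components $P_{q_s}(\nabla_XY+\nabla_YX)$ for $s=3,\dots,r$ (those indices lying outside $\{q(1),q(2)\}$ but inside the chosen block). For each such $s$, the component $P_{q_s}(\nabla_XY+\nabla_YX)$ is precisely the $\mD_{q_s}$-part of the mixed second fundamental form of the \emph{pair} $(\mD_{q(1)},\mD_{q(2)})$ — no, more carefully: it is the $\mD_{q_s}$-component, which by the pairwise hypothesis applied to $(\mD_{q(1)},\mD_{q(2)})$ is already zero, since mixed total geodesy of that single pair asserts $P_m(\nabla_XY+\nabla_YX)=0$ for \emph{all} $m\notin\{q(1),q(2)\}$ as well (the target of $h_{ij}$ is the full orthogonal complement of $\mD_i\oplus\mD_j$). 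So the content is essentially: the definition of ``mixed totally geodesic pair'' already controls every transverse component, and the block statement is a repackaging. The main step is therefore to state the definition of $h_{q_1,\dots,q_r}$ explicitly (via the orthoprojector onto $\mD_{q_s}\subset(\mD_{q(1)}\oplus\mD_{q(2)})^{\bot}$) and observe that each summand is annihilated by one instance of hypothesis~(a).

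For part~(b) the argument is identical with $\nabla_XY+\nabla_YX$ replaced by $[X,Y]=\nabla_XY-\nabla_YX$: the hypothesis that $(\mD_{q(1)},\mD_{q(2)})$ is mixed integrable gives $P_m[X,Y]=0$ for every $m\ne q(1),q(2)$, in particular for $m=q_s$, $s\ge3$, which is what $T_{q_1,\dots,q_r}(X,Y)$ records. I would present (a) in full and then simply note that (b) follows \emph{mutatis mutandis}, replacing symmetrization $\nabla_XY+\nabla_YX$ by antisymmetrization $[X,Y]$ throughout and ``second fundamental form'' by ``integrability tensor.''

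The step I expect to be the real (if modest) obstacle is purely notational: the indices $q(1),q(2)$ versus $q_1,\dots,q_r$ in the statement are ambiguous, and one must pin down the convention under which $h_{q_1,\dots,q_r}(X,Y)$ is defined — specifically, that it is the $\big(\mD_{q_1}\oplus\cdots\oplus\mD_{q_r}\big)$-second fundamental form's component landing in $\mD_{q(s)}$ for the indices $q(s)$ distinct from those carrying $X$ and $Y$ — before the vanishing becomes a one-line consequence. Once that definition is laid out, no computation remains: each relevant component is, verbatim, one of the quantities the pairwise hypothesis sets to zero. I would close by remarking that this is why the pairwise conditions, though seemingly weaker than a ``global'' mixed-total-geodesy requirement, already propagate to all sub-blocks, which is exactly the leverage used later for the splitting results.
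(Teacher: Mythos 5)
The paper itself gives no proof of this lemma (it is imported from \cite{r-IF-k}), so I can only judge your argument on its merits: it is correct in substance and is the natural one-line projection argument. For $X\in\mD_{q(1)}$, $Y\in\mD_{q(2)}$, mixed total geodesy (resp.\ mixed integrability) of the single pair $(\mD_{q(1)},\mD_{q(2)})$ already forces $\nabla_XY+\nabla_YX$ (resp.\ $[X,Y]$) to lie in $\mD_{q(1)}\oplus\mD_{q(2)}$, and $h_{q_1,\ldots,q_r}(X,Y)$, $T_{q_1,\ldots,q_r}(X,Y)$ are obtained by projecting these vectors onto a subbundle orthogonal to $\mD_{q(1)}\oplus\mD_{q(2)}$, hence vanish --- exactly your ``each relevant component is, verbatim, one of the quantities the pairwise hypothesis sets to zero.'' One convention needs fixing, though: $h_{q_1,\ldots,q_r}$ and $T_{q_1,\ldots,q_r}$ are the second fundamental form and integrability tensor of the block $\mD_{q_1}\oplus\cdots\oplus\mD_{q_r}$, so they take values in $(\mD_{q_1}\oplus\cdots\oplus\mD_{q_r})^\bot$, not in the summands $\mD_{q_s}$, $s\ge3$, inside the block; correspondingly, the pairwise hypothesis means $P^\bot_{ij}(\nabla_XY+\nabla_YX)=0$ (all components outside $\mD_i\oplus\mD_j$ vanish), not the vanishing of the $\mD_i$- and $\mD_j$-components as your opening sentence states --- the latter reading would not support the later applications (e.g.\ deducing $T_i^\bot=0$ and the umbilical estimate in Theorem~\ref{C-Step3}). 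Since your operative step kills every component orthogonal to $\mD_{q(1)}\oplus\mD_{q(2)}$, which contains the true target bundle, the proof goes through unchanged once the definitions are stated correctly.
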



The next definition is introduced to simplify the presentation of results.
 A statistical connection $\bar\nabla=\nabla+\I$ on $(M,g;\mD_1,\ldots,\mD_k)$
is called \textit{adapted} if
 $\I$ is decomposed into $\mD_i$-components, i.e.,
\[
 \I_XY=0\quad (X\in\mD_i,\ \ Y\in\mD_j,\ \ 0\le i\ne j\le k).
\]

\begin{lemma}\label{L-barS=S}
Let $(M,g,\bar\nabla;\mD_1,\ldots,\mD_k)$ be an almost $k$-product manifold with
a statistical adapted connection $\bar\nabla=\nabla+\I$. Then
$\,\overline{\rm S}_{\,\mD_1,\ldots,\mD_k} = {\rm S}_{\,\mD_1,\ldots,\mD_k}$.
\end{lemma}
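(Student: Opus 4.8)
The natural strategy is to reduce everything to the known equality \eqref{eqvarstat} (equivalently, formula \eqref{E-Q2-gen} for statistical connections, Corollary~\ref{C-stat}) and show that, under the \emph{adapted} hypothesis, every quantity appearing there that is built from the contorsion $\I$ and restricted to the mixed parts $V(\mD_i)$ simply vanishes. Concretely, I would start from \eqref{eqvarstat}, which already holds for any statistical connection, and argue that for an adapted $\I$ each summand $\langle\tr_{\,\mD_i^\bot}\I,\ \tr_{\,\mD_i}\I\rangle-\tfrac12\,\langle\I,\I\rangle_{\,|\,V(\mD_i)}$ is identically zero; then \eqref{eqvarstat} collapses to $2\,\overline{\rm S}_{\,\mD_1,\ldots,\mD_k}=2\,{\rm S}_{\,\mD_1,\ldots,\mD_k}$, which is the claim. (Alternatively one could go through the sum of $k$ copies of \eqref{E-div-barQ} and check $\bar Q(\mD_i,g,\I)\equiv0$ term by term, but working from the already-simplified \eqref{eqvarstat} is cleaner since symmetry of the cubic form $A$ has already been used there.)

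The computational core is the vanishing of those two pieces. First, $\langle\I,\I\rangle_{\,|\,V(\mD_i)}$: by definition $V(\mD_i)=(\mD_i\times\mD_i^\bot)\cup(\mD_i^\bot\times\mD_i)$, so this norm is a sum of terms $\langle\I_{E_a}E_b,\,\I_{E_a}E_b\rangle$ (and the symmetric partner) with $E_a\in\mD_i$, $E_b\in\mD_i^\bot$ or vice versa. Since $\mD_i^\bot=\bigoplus_{j\ne i}\mD_j$, in every such term one of the two slots of $\I$ lies in $\mD_i$ and the other in some $\mD_j$ with $j\ne i$; the adapted condition $\I_XY=0$ for $X\in\mD_i$, $Y\in\mD_j$, $i\ne j$, kills each of them (using also the statistical symmetry $\I^\wedge=\I$ to handle the case where the $\mD_i$-vector is in the lower, rather than the subscript, slot). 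Second, $\tr_{\,\mD_i^\bot}\I=\sum_{b\ne(n_{i-1},n_i]}\eps_b\,\I_{E_b}E_b$: each $E_b$ here lies in some $\mD_j$ with $j\ne i$, so $\I_{E_b}E_b$ is the $\mD_j$-component of $\I$ evaluated on $\mD_j$-vectors, hence a section of $\mD_j$; consequently $\tr_{\,\mD_i^\bot}\I\in\bigoplus_{j\ne i}\mD_j=\mD_i^\bot$. Likewise $\tr_{\,\mD_i}\I\in\mD_i$. Being in orthogonal distributions, their inner product $\langle\tr_{\,\mD_i^\bot}\I,\,\tr_{\,\mD_i}\I\rangle$ vanishes. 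Both pieces gone, the $i$-sum in \eqref{eqvarstat} is zero.

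The step I expect to require the most care is the bookkeeping in $\langle\I,\I\rangle_{\,|\,V(\mD_i)}$: one must be sure that \emph{both} indices ranging over $V(\mD_i)$, together with the adapted decomposition of $\I$ into its $\mD_j$-blocks (so that $\I_{E_a}E_b$ is, a priori, a sum over $j$ of $P_j\I_{E_a}E_b$), genuinely force at least one argument out of the block on which $\I$ is supported. Expanding $\I=\sum_j \I^{(j)}$ with $\I^{(j)}$ the $\mD_j$-block (meaning $\I^{(j)}_XY=0$ unless $X,Y\in\mD_j$, and then $\I^{(j)}_XY\in\mD_j$), a term $\langle\I^{(j)}_{E_a}E_b,\,\I^{(l)}_{E_a}E_b\rangle$ with $E_a\in\mD_i$, $E_b\in\mD_j'$ is nonzero only if $i=j$ \emph{and} $j'=j$, i.e.\ only if $E_b\in\mD_i$ — contradicting $E_b\in\mD_i^\bot$. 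The other orientation in $V(\mD_i)$ is symmetric. Once this is laid out, the rest is immediate, and one does not even need to invoke Lemma~\ref{L-barS=S}'s neighbours about $h$ or $T$; the conclusion is purely about the $\I$-dependent correction terms dropping out. Hence $\overline{\rm S}_{\,\mD_1,\ldots,\mD_k}={\rm S}_{\,\mD_1,\ldots,\mD_k}$. $\square$
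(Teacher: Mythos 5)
Your proposal is correct and takes essentially the same route as the paper: the paper's proof likewise invokes Corollary~\ref{C-stat} to get $\bar Q(\mD_i,g,\I)=0$ for an adapted statistical connection and then reads off the claim from \eqref{eqvarstat}. You merely make explicit the details the paper leaves implicit, namely that adaptedness (together with $\I^*=\I$, which places $\tr_{\,\mD_i}\I$ in $\mD_i$ and $\tr_{\,\mD_i^\bot}\I$ in $\mD_i^\bot$) annihilates both $\<\tr_{\,\mD_i^\bot}\I,\,\tr_{\,\mD_i}\I\>$ and $\<\I,\I\>_{\,|\,V(\mD_i)}$.
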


\begin{proof}
We have $\bar Q(\mD_i,g,\I)=0$ for $1\le i\le k$, see Corollary~\ref{C-stat}.
Thus, \eqref{eqvarstat} yields the claim.
\end{proof}

The following two splitting results generalize \cite[Theorem~6]{step1} and \cite[Theorem~2]{wa1} with $k=2$.

\begin{theorem}
Let an almost $k$-product manifold $(M,g,\bar\nabla;\mD_1,\ldots,\mD_k)$ with $g>0$
and a statistical adapted connection $\bar\nabla=\nabla+\I$ has integrable harmonic distributions
$\mD_1,\ldots,\mD_k$ such that each pair $(\mD_i,\mD_j)$ is mixed integrable.
If~$\,\overline{\rm S}_{\,\mD_1,\ldots,\mD_k}\ge0$, then $(M,g)$ splits.
\end{theorem}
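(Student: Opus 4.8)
The plan is to reduce everything to the integral formula \eqref{E-int-k} and exploit the hypotheses term by term. First I would invoke Lemma~\ref{L-barS=S}: since $\bar\nabla$ is a statistical adapted connection, $\overline{\rm S}_{\,\mD_1,\ldots,\mD_k}={\rm S}_{\,\mD_1,\ldots,\mD_k}$, so the curvature assumption becomes ${\rm S}_{\,\mD_1,\ldots,\mD_k}\ge0$. Next I would examine $Q(\mD_i,g)$ from \eqref{E-func-Q}. Each $\mD_i$ is integrable ($T_i=0$) and harmonic ($H_i=0$); moreover, by the mixed integrability of every pair $(\mD_i,\mD_j)$ together with Lemma~\ref{L-mixed-YU-TG}(b), the integrability tensor $T_i^\bot$ of the complementary distribution $\mD_i^\bot$ also vanishes (its values on pairs from different $\mD_j,\mD_\ell$ with $j,\ell\ne i$ die by the lemma, and within a single $\mD_j$ it dies since $T_j=0$); similarly $H_i^\bot=\sum_{j\ne i}H_j=0$ because each $\mD_j$ is harmonic. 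Hence $Q(\mD_i,g)=-\<h_i,h_i\>-\<h_i^\bot,h_i^\bot\>$, and since $g>0$ this is $\le0$, with equality iff $h_i=0$ and $h_i^\bot=0$.

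Plugging these into \eqref{E-int-k} gives
\[
 0=\int_M\Big(2\,{\rm S}_{\,\mD_1,\ldots,\mD_k}+\sum\nolimits_i(\<h_i,h_i\>+\<h_i^\bot,h_i^\bot\>)\Big)\,{\rm d}\vol_g,
\]
where every summand of the integrand is nonnegative. Therefore the integrand vanishes identically, forcing ${\rm S}_{\,\mD_1,\ldots,\mD_k}\equiv0$ and, crucially, $h_i\equiv0$ for every $i$. Thus each $\mD_i$ is totally geodesic, and combined with integrability of all the $\mD_i$ this is exactly the classical de Rham-type condition: the complementary integrable totally geodesic foliations tangent to $\mD_1,\ldots,\mD_k$ make $M$ locally a direct product $M_1\times\cdots\times M_k$ with the canonical foliations tangent to the $\mD_i$. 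Hence $(M,g)$ splits, as claimed.

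I expect the main technical obstacle to be the bookkeeping needed to show that the extrinsic invariants of the \emph{complementary} distributions $\mD_i^\bot$ (namely $H_i^\bot$, $h_i^\bot$, $T_i^\bot$) are controlled by the hypotheses, since those are stated only for the $\mD_i$ and for mixed pairs. The vanishing of $H_i^\bot$ and $T_i^\bot$ follows cleanly from harmonicity of the individual distributions plus Lemma~\ref{L-mixed-YU-TG}(b), but $h_i^\bot$ need not vanish \emph{a priori} — it only enters \eqref{E-func-Q} with a favorable sign, and its vanishing emerges as a \emph{conclusion} of the integral argument, after which one still gets $h_i^\bot=0$ for free because $h_i^\bot$ decomposes into pieces $h_{ij}$ ($j\ne i$) each of which is a restriction of some $h_j$. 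A secondary point to handle carefully is the passage from ``locally a product'' to the stated splitting: this is the standard de Rham decomposition theorem for orthogonal complementary integrable totally geodesic distributions, which I would simply cite as well known (as the paper already does in its definition of ``splits'').
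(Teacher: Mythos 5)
Your reduction of the hypotheses is correct and matches the paper: harmonicity gives $H_i=0$, hence $H_i^\bot=\sum_{j\ne i}P_i(H_j)=0$; integrability plus mixed integrability and Lemma~\ref{L-mixed-YU-TG}(b) give $T_i=T_i^\bot=0$; Corollary~\ref{C-stat} (equivalently Lemma~\ref{L-barS=S}) kills the contorsion terms; and $Q(\mD_i,g)=-\<h_i,h_i\>-\<h_i^\bot,h_i^\bot\>\le 0$ for $g>0$. The gap is in the final step: you feed this into the integral formula \eqref{E-int-k}, which is established only for a \emph{closed} manifold $M$ via the Divergence Theorem, whereas the theorem makes no compactness assumption (contrast the neighbouring splitting result, which explicitly adds completeness and an ${\rm L}^1$ hypothesis precisely because it cannot avoid integrating). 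On a non-compact $M$ your argument stalls at the integration step. The paper avoids this entirely: under your own computations the vector field inside the divergence in \eqref{E-Q1Q2-gen} vanishes identically (statistical means $\I-\I^*=0$, and all $H_i$, $H_i^\bot$ are zero), so \eqref{E-Q1Q2-gen} is already the \emph{pointwise} identity
\begin{equation*}
 2\,\overline{\rm S}_{\,\mD_1,\ldots,\mD_k}+\sum\nolimits_{\,i}\big(\|h_i\|^2+\|h_i^\bot\|^2\big)=0 ,
\end{equation*}
and positivity forces $h_i=0$ and $h_i^\bot=0$ everywhere without any integration. So the fix is simply to drop the appeal to \eqref{E-int-k} and argue pointwise from \eqref{E-Q1Q2-gen}.

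A secondary inaccuracy: your remark that $h_i^\bot$ ``decomposes into pieces each of which is a restriction of some $h_j$'' is not right, since $h_i^\bot$ also has genuinely mixed components on $\mD_j\times\mD_l$ ($j\ne l$, both $\ne i$) that are not controlled by the $h_j$; but this remark is unnecessary, because $h_i^\bot=0$ comes directly from the displayed identity, and together with $h_i=0$, $T_i=T_i^\bot=0$ each pair $(\mD_i,\mD_i^\bot)$ is totally geodesic and integrable, so de Rham gives the local splitting exactly as you (and the paper) conclude.
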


\begin{proof}
From the equality $H_{1\ldots r}=P_{r+1\ldots k}(H_1+\ldots+H_r)$ it follows that
$H_i=0$ for all $i\le k$, then $H_{\,i}^\bot=0$ for all $i\le k$.
Similarly (by Lemma~\ref{L-mixed-YU-TG}), if $T_{\,ij}=0$ for all $i\le k$, then $T_{\,i}^\bot=0$ for all $i\le k$.
By conditions, \eqref{E-func-Q} with $\mD=\mD_i$, \eqref{E-Q1Q2-gen} and Corollary~\ref{C-stat},
\begin{equation*}
 2\,\overline{\rm S}_{\,\mD_1,\ldots,\mD_k} +\sum\nolimits_{\,i} (\|h_i\|^2 + \|h_i^\bot\|^2)
 =0.
\end{equation*}
Thus,
$h_i=0\ (1\le i\le k)$.
By~well-known de Rham decomposition theorem, $(M,g)$~splits.
\end{proof}

\begin{theorem}\label{C-Step3}
Let an almost $k$-product manifold $(M,g,\bar\nabla;\mD_1,\ldots,\mD_k)$ with $g>0$
and a statistical adapted connection $\bar\nabla=\nabla+\I$
has totally umbilical distributions such that each pair $(\mD_i,\mD_j)$ is mixed totally geodesic, $\<H_i,H_j\>=0$ for all $i\ne j$.
If $(M,g)$ is complete open, $\|\xi\|\in{\rm L}^1(M,g)$ for $\xi=\sum_{\,i}(H_i+H_i^\bot)$
and $\,\overline{\rm S}_{\,\mD_1,\ldots,\mD_k}\le0$, then $(M,g)$~splits.
\end{theorem}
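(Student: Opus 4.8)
The plan is to run the same argument as in the previous theorem, but using Lemma~\ref{L-Div-1} in place of the Divergence Theorem, since now $M$ is complete open rather than closed. First I would record the consequences of the hypotheses on the geometry of the distributions. Since each pair $(\mD_i,\mD_j)$ is mixed totally geodesic, Lemma~\ref{L-mixed-YU-TG}(a) gives $h_{q_1,\ldots,q_r}=0$ on mixed arguments for every subset of indices; in particular, from $h_i^\bot = \sum_{j\ne i} h_{ij}$-type decompositions one obtains that the mixed parts of $h_i^\bot$ vanish, so $h_i^\bot$ reduces to its ``pure'' blocks $h_{jj}$ for $j\ne i$. Because each $\mD_i$ is totally umbilical, $h_i = (H_i/n_i)\,g_{|\mD_i}$, hence $\<h_i,h_i\> = \<H_i,H_i\>/n_i$, and summing the block contributions gives $\<h_i^\bot,h_i^\bot\> = \sum_{j\ne i}\<H_j,H_j\>/n_j$. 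Likewise $H_i^\bot = \sum_{j\ne i} H_j$, so $\<H_i^\bot,H_i^\bot\> = \sum_{j\ne i}\<H_j,H_j\>$ using $\<H_i,H_j\>=0$ for $i\ne j$. The mixed-totally-geodesic condition does not kill the integrability tensors, so the $T$-terms survive; I keep them as $\<T_i,T_i\>+\<T_i^\bot,T_i^\bot\>$.

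Next I would assemble $Q(\mD_i,g)$ from \eqref{E-func-Q} and sum over $i$. By Corollary~\ref{C-stat} (the connection is statistical and adapted, hence $\I$ has no mixed components, so $\<\I,\I\>_{|V(\mD_i)}=0$ and $\tr_{\mD_i}\I$, $\tr_{\mD_i^\bot}\I$ pair trivially across the splitting), we get $\bar Q(\mD_i,g,\I)=0$ for all $i$, and by Lemma~\ref{L-barS=S} we have $\overline{\rm S}_{\,\mD_1,\ldots,\mD_k} = {\rm S}_{\,\mD_1,\ldots,\mD_k}$. Therefore \eqref{E-Q1Q2-gen} collapses to
\[
 \Div \xi = 2\,\overline{\rm S}_{\,\mD_1,\ldots,\mD_k} - \sum\nolimits_{\,i} Q(\mD_i,g),\qquad \xi = \sum\nolimits_{\,i}(H_i+H_i^\bot).
\]
Now plug in the computed norms. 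The pure-$h$ and $H$ contributions should cancel: the $\<H_i^\bot,H_i^\bot\>$ and $\<H_i,H_i\>$ positive terms are offset by $-\<h_i,h_i\>-\<h_i^\bot,h_i^\bot\>$, but only partially, since $\<h_i,h_i\>=\<H_i,H_i\>/n_i \le \<H_i,H_i\>$ with equality iff $n_i=1$. Here I expect the key algebraic identity to be that, after summing over $i$, the combination $\sum_i\big(\<H_i^\bot,H_i^\bot\>+\<H_i,H_i\>-\<h_i,h_i\>-\<h_i^\bot,h_i^\bot\>\big)$ is a nonnegative quadratic form in the $H_j$'s; together with the surviving $\sum_i(\<T_i,T_i\>+\<T_i^\bot,T_i^\bot\>)\ge0$, this makes $\sum_i Q(\mD_i,g)\ge0$, hence $\Div\xi = 2\,\overline{\rm S}_{\,\mD_1,\ldots,\mD_k} - \sum_i Q(\mD_i,g) \le 0$ under the curvature hypothesis. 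Replacing $\xi$ by $-\xi$ we have $\Div(-\xi)\ge0$ with $\|-\xi\|_g=\|\xi\|_g\in{\rm L}^1(M,g)$, so Lemma~\ref{L-Div-1} forces $\Div\xi\equiv0$.

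From $\Div\xi\equiv0$ and $\overline{\rm S}_{\,\mD_1,\ldots,\mD_k}\le0$ while $\sum_i Q(\mD_i,g)\ge0$, the only possibility is $\overline{\rm S}_{\,\mD_1,\ldots,\mD_k}\equiv0$ and $Q(\mD_i,g)\equiv0$ for each $i$. Since $\sum_i Q(\mD_i,g)$ is a sum of nonnegative terms that vanishes, every $T_i$, $T_i^\bot$ vanishes, so all $\mD_i$ are integrable; and the quadratic form in the $H_j$'s vanishes, which (because it is positive definite away from $H=0$, using $n_i\ge1$) forces $H_i=0$ for all $i$, hence $h_i = (H_i/n_i)g=0$ and $h_i^\bot=0$ as well. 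Thus all $\mD_i$ are totally geodesic and integrable, and the de Rham decomposition theorem gives that $(M,g)$ splits. The main obstacle I anticipate is verifying carefully that the $H$-quadratic form obtained after the cancellation is genuinely nonnegative (and definite) under the stated orthogonality $\<H_i,H_j\>=0$; this is where the totally-umbilical hypothesis and the bookkeeping of the pure blocks of $h_i^\bot$ via Lemma~\ref{L-mixed-YU-TG} are essential, and one must make sure no cross terms of indefinite sign remain.
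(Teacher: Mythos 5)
Your proposal is correct and follows essentially the same route as the paper: reduce \eqref{E-Q1Q2-gen} via Corollary~\ref{C-stat} and Lemma~\ref{L-barS=S} to $\Div\xi = 2\,\overline{\rm S}_{\,\mD_1,\ldots,\mD_k}-\sum_i Q(\mD_i,g)$, show $Q(\mD_i,g)\ge0$ from umbilicity, mixed total geodesy and $\<H_i,H_j\>=0$, apply Lemma~\ref{L-Div-1} (to $-\xi$) to get $\Div\xi\equiv0$, and conclude with the de Rham decomposition theorem. The only slip is bookkeeping in the ``key algebraic identity'' you flag: the correct formulas carry projections onto $\mD_i$, namely $H_i^\bot=\sum_{j\ne i}P_iH_j$ and (as in the paper) $\|H_i^\bot\|^2-\|h_i^\bot\|^2=\sum_{j\ne i}\frac{n_j-1}{n_j}\,\|P_i H_j\|^2\ge 0$, rather than your unprojected $\sum_{j\ne i}H_j$ and $\sum_{j\ne i}\<H_j,H_j\>/n_j$ --- but this does not change the nonnegativity or the rest of the argument.
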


\begin{proof}
 By assumptions and Lemma~\ref{L-barS=S}, from
 \eqref{E-Q1Q2-gen} we get
\begin{equation}\label{E-int-k-umb}
 \Div \xi = 2\,\overline{\rm S}_{\,\mD_1,\ldots,\mD_k} -\sum\nolimits_{\,i} Q(\mD_i,g),
\end{equation}
where $Q(\mD_i,g)$ is given in \eqref{E-func-Q} with $\mD=\mD_i$.
By conditions, for any $i\le k$
we have
\begin{equation*}
 \|H_{\,i}^\bot\|^2-\|h_{\,i}^\bot\|^2 = \sum\nolimits_{j\ne i}\frac{n_{j}-1}{n_{j}}\,\|P^\bot_{\,i} H_{j}\|^2 \ge 0,
\end{equation*}
where $P^\bot_{\,i}$ is the orthoprojector onto $\mD_{\,i}^\bot$.
Hence, $Q(\mD_i,g)\ge0$, and from $\overline{\rm S}_{\,\mD_1,\ldots,\mD_k}\le0$ and \eqref{E-int-k-umb} we get $\Div\xi\le0$.
By conditions and Lemma~\ref{L-Div-1}, $\Div\xi=0$. Thus, see \eqref{E-int-k-umb}, $\overline{\rm S}_{\,\mD_1,\ldots,\mD_k}=0$ and ${T}_i$ and $h_i$ vanish.
By de Rham decomposition theorem, $(M,g)$ splits.
\end{proof}

Totally umbilical integrable distributions appear on multiply twisted
products.
A~\textit{multiply twisted product} $F_0\times_{u_1}F_1\times\ldots\times_{u_k} F_k$ of Riemannian manifolds $(F_0,g_{F_0}),\ldots,(F_{k},g_{F_{k}})$
is the direct product $M=F_0\times\ldots\times F_k$ with metric $g=g_{F_0}\oplus u_1^2\,g_{F_1}\oplus\ldots\oplus u_k^2\,g_{F_k}$, where $u_i:F_0\times F_i\to(0,\infty)$ for $1\le i\le k$ are smooth functions, see \cite{wang,wang2}.
The twisted products (i.e., $k=1$, see \cite{pr}) and multiply warped products (i.e., $u_i:F_0\to(0,\infty)$, see \cite{chen1}) are special cases of multiply twisted products.
 Let ${\cal D}_i$ be the distribution on $M$ obtained from the vectors tangent to horizontal lifts of $F_i$.
The {leaves} tangent to ${\cal D}_i,\ i\ge1$, are totally umbilical, with the mean curvature vector fields
 $H_i=-n_iP_0(\nabla(\log u_i))$ tangent to $\mD_0$,
and the {fibers} (i.e., the leaves tangent to ${\cal D}_0$) are totally geodesic, see \cite[Proposition~2.3]{wang2}.

On a multiply twisted product with $k>2$ each pair of distributions is mixed totally geodesic.
Indeed, such manifold is diffeomorphic to the direct product, and the Lie bracket does not depend on metric.

The following corollary of Theorem~\ref{C-Step3} generalizes \cite[Corollary~4]{r-IF-k} with $\I=0$.

\begin{corollary}
Let a closed {multiply twisted product} manifold $(M,g)$ be endowed with a statistical adapted connection
and let $\<H_i,H_j\>=0$ for $i\ne j$.
If $\,\overline{\rm S}_{\,{\cal D}_1,\ldots,{\cal D}_k}\le0$, then $M$ is the direct product.
\end{corollary}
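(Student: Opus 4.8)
The plan is to derive the statement as a direct specialization of Theorem~\ref{C-Step3} to the multiply twisted product geometry. First I would verify that a closed multiply twisted product $(M,g)=F_0\times_{u_1}F_1\times\ldots\times_{u_k}F_k$ with $k>2$ satisfies all the structural hypotheses of Theorem~\ref{C-Step3}. By the cited \cite[Proposition~2.3]{wang2}, each leaf distribution $\mD_i$ with $i\ge 1$ is totally umbilical with $H_i=-n_iP_0(\nabla\log u_i)$, and the fiber distribution $\mD_0$ is totally geodesic, hence in particular totally umbilical; so all distributions $\mD_0,\mD_1,\ldots,\mD_k$ are totally umbilical. The remark preceding the corollary records that each pair $(\mD_i,\mD_j)$ on such a product is mixed totally geodesic — because the manifold is diffeomorphic to the honest direct product $F_0\times\ldots\times F_k$, and $h_{ij}$ is built from Lie brackets of sections of the summand tangent bundles, which vanish and do not depend on the choice of metric. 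The hypothesis $\<H_i,H_j\>=0$ for $i\ne j$ is assumed outright in the corollary.

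Next I would handle the completeness and integrability assumptions of Theorem~\ref{C-Step3}. Since $M$ is closed, it is complete, and more strongly every vector field has $\|\cdot\|_g\in L^1(M,g)$ because $M$ has finite volume; in particular $\xi=\sum_i(H_i+H_i^\bot)$ lies in $L^1(M,g)$ trivially. (Alternatively, on a closed manifold one does not even need Lemma~\ref{L-Div-1}: the Divergence Theorem applies directly, but invoking Theorem~\ref{C-Step3} as a black box is cleaner.) The connection is assumed statistical and adapted. Thus all hypotheses of Theorem~\ref{C-Step3} are in force, and the conclusion $\overline{\rm S}_{\,\mD_1,\ldots,\mD_k}\le 0$ gives that $(M,g)$ splits — i.e., all $\mD_i$ are integrable and $M$ is locally the direct product with its canonical foliations tangent to the $\mD_i$.

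Finally I would upgrade "splits locally" to "is the direct product" globally. Here the point is that $M$ is already diffeomorphic to $F_0\times\ldots\times F_k$, so it is in particular connected with the product factors connected; but to conclude the \emph{metric} is a product one uses the de~Rham-type splitting together with the fact that the twisted-product structure already identifies the factor submanifolds. Concretely: Theorem~\ref{C-Step3}'s proof shows $T_i$ and $h_i$ vanish for all $i$, i.e., every $\mD_i$ is totally geodesic and integrable, so $g$ is locally a Riemannian product adapted to the $\mD_i$; since the leaves through a point coincide with the (connected) product factors of the underlying smooth product, and these are complete (closed), the local product metric is in fact global, so $g=g_{F_0}\oplus\ldots\oplus g_{F_k}$ up to the identification, which is exactly the statement that $M$ is the direct product. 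This matches the pattern of \cite[Corollary~4]{r-IF-k} in the case $\I=0$ and simply adds the adapted statistical connection, for which Lemma~\ref{L-barS=S} ensures $\overline{\rm S}_{\,\mD_1,\ldots,\mD_k}={\rm S}_{\,\mD_1,\ldots,\mD_k}$ so that the hypothesis is really about the underlying Levi-Civita geometry.

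The only mildly delicate step is the mixed-totally-geodesic claim for the pairs $(\mD_i,\mD_j)$: one must be careful that "totally umbilical" for the individual $\mD_i$ (with generally nonzero $H_i$) does not by itself give mixed totally geodesic, and that the vanishing of $h_{ij}$ genuinely comes from the product \emph{diffeomorphism} type rather than from umbilicity. Once that is in place, everything else is a checklist application of Theorem~\ref{C-Step3}.
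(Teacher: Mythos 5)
Your proposal is correct and follows essentially the same route as the paper, which offers no separate proof: the corollary is stated as an immediate consequence of Theorem~\ref{C-Step3} combined with exactly the observations you cite (totally umbilical leaves and totally geodesic fibers from \cite[Proposition~2.3]{wang2}, mixed total geodesy of each pair, and $\<H_i,H_j\>=0$ by hypothesis). Your additional remarks — replacing the ``complete open'' / $L^1$ hypothesis by the Divergence Theorem on a closed manifold, and upgrading the local splitting to the global direct product via the underlying product structure — only make explicit details the paper leaves implicit.
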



The following result generalizes \cite[Theorem~2]{wa1} and \cite[Corollary~7]{r-affine},
where $k=2$.

\begin{theorem}
Let an almost $k$-product manifold $(M,g,\bar\nabla;\mD_1,\ldots,\mD_k)$ with $g>0$
and a statistical adapted connection $\bar\nabla=\nabla+\I$
has integrable
distributions
$\mD_1,\ldots,\mD_k$ such that each pair $(\mD_i,\mD_j)$ is mixed integrable.
Suppose that $\mD_j$ is harmonic $($i.e., $H_j=0)$ for some index $j$ and $H_i\in\mD_j$ and all $i\ne j$.
If~$\,\overline{\rm S}_{\,\mD_1,\ldots,\mD_k}>0$, then a foliation tangent to $\mD_j$ has no compact leaves.
\end{theorem}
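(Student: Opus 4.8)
The plan is to argue by contradiction: suppose the foliation $\calf_j$ tangent to $\mathcal{D}_j$ has a compact leaf $L$, and derive a contradiction with $\overline{\rm S}_{\,\mathcal{D}_1,\ldots,\mathcal{D}_k}>0$ by integrating an appropriate divergence identity over $L$. First I would restrict all the relevant tensors to $L$. Since the connection is statistical and adapted, Lemma~\ref{L-barS=S} gives $\overline{\rm S}_{\,\mathcal{D}_1,\ldots,\mathcal{D}_k}={\rm S}_{\,\mathcal{D}_1,\ldots,\mathcal{D}_k}$, so the positivity hypothesis is really about the Levi-Civita mixed scalar curvature, and I may work entirely with $\nabla$. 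The key structural input is that all distributions are integrable (so $T_i=0$ for all $i$) and each pair is mixed integrable; by Lemma~\ref{L-mixed-YU-TG}(b) this forces $T_{q_1,\ldots,q_r}=0$ for all index subsets, in particular $T_i^\bot=0$ for every $i$. The hypotheses $H_j=0$ and $H_i\in\mathcal{D}_j$ for $i\ne j$ are what will make the boundary/divergence terms collapse when restricted to $L$.

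The central step is to find a vector field on $L$ whose divergence (with respect to the induced metric on $L$) equals, up to sign, the mixed scalar curvature plus a sum of nonnegative terms. The natural candidate comes from specializing \eqref{E-PW3-k}: the relevant vector field is $\xi=\sum_i(H_i+H_i^\bot)$. Under the hypotheses, $H_j=0$; for $i\ne j$ we have $H_i\in\mathcal{D}_j$, hence $H_i$ is tangent to $L$, and moreover $H_i^\bot$ — the mean curvature of $\mathcal{D}_i^\bot$ — needs to be analyzed. Here I would use that $H_i^\bot=\sum_{m\ne i}P_i^\bot H_m$ together with $H_j=0$ and $H_m\in\mathcal{D}_j$ to show that, after projecting onto $TL=\mathcal{D}_j$, only the $\mathcal{D}_j$-components survive and $\Div\xi$ on $M$ restricted to $L$ differs from $\Div_L(\xi^\top)$ only by terms involving $\langle\xi,H_j\rangle$-type corrections (cf.\ \eqref{E-divN}), which vanish because $H_j=0$. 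Then integrating over the closed leaf $L$ kills the divergence term, leaving
\[
 0 = \int_L\Big(\,2\,{\rm S}_{\,\mathcal{D}_1,\ldots,\mathcal{D}_k} - \sum\nolimits_i Q(\mathcal{D}_i,g)\,\Big)\,{\rm d}\vol_L\,.
\]
Finally I would check that under the present hypotheses $\sum_i Q(\mathcal{D}_i,g)\le 0$ on $L$, so that the integrand is $\ge 2\,{\rm S}_{\,\mathcal{D}_1,\ldots,\mathcal{D}_k}>0$, a contradiction. Recall $Q(\mathcal{D}_i,g)=\|H_i^\bot\|^2+\|H_i\|^2-\|h_i\|^2-\|h_i^\bot\|^2+\|T_i\|^2+\|T_i^\bot\|^2$; since all $T_i$ and $T_i^\bot$ vanish, the terms $\|h_i\|^2$ and $\|h_i^\bot\|^2$ are nonnegative and subtract off, and one uses $H_j=0$, $H_i\in\mathcal{D}_j$ to bound the $\|H_i^\bot\|^2+\|H_i\|^2$ part. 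As in the proof of Theorem~\ref{C-Step3}, the identity $\|H_i^\bot\|^2-\|h_i^\bot\|^2=\sum_{m\ne i}\frac{n_m-1}{n_m}\|P_i^\bot H_m\|^2$ is not quite what is needed here (that was for totally umbilical distributions); instead, for general integrable distributions one has $\|H_i^\bot\|^2-\|h_i^\bot\|^2\le 0$ only under extra structure, so the harmonicity of $\mathcal{D}_j$ and $H_i\in\mathcal{D}_j$ must be used to show the surviving positive contributions $\sum_i\|H_i\|^2$ are matched or dominated.

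The main obstacle I anticipate is this last sign bookkeeping: carefully tracking which components of the various mean curvature vectors survive restriction to the leaf $L$ and verifying that $\sum_i Q(\mathcal{D}_i,g)\le 0$ pointwise on $L$. The delicate point is that $H_i^\bot$ for $i\ne j$ may have components both along $\mathcal{D}_j$ and transverse to it, and one must use $H_j=0$ together with $H_i\in\mathcal{D}_j$ to show the transverse components of the $h_i^\bot$ and $H_i^\bot$ terms combine favorably. A clean way around this is to observe that the hypotheses ``$\mathcal{D}_j$ harmonic'' and ``$H_i\in\mathcal{D}_j$ for all $i\ne j$'' together say that the normal plane field $\mathcal{D}_j^\bot$ is totally geodesic as an integrable distribution (its mean curvature $H_j^\bot=\sum_{m\ne j}H_m$ lies in $\mathcal{D}_j$, and by an argument like Lemma~\ref{L-mixed-YU-TG} one gets vanishing of the relevant second fundamental form pieces), so that $\sum_{i\ne j}(\|H_i^\bot\|^2-\|h_i^\bot\|^2)$ restricted to $L$ is a sum of the nonnegative Cauchy–Schwarz defects $\frac{n_m-1}{n_m}\|P_i^\bot H_m\|^2$, recovering exactly the situation of Theorem~\ref{C-Step3}. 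Once the integrand is shown to be $\ge 2\,\overline{\rm S}_{\,\mathcal{D}_1,\ldots,\mathcal{D}_k}>0$, integration over the compact $L$ gives the contradiction and finishes the proof.
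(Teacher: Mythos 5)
Your overall strategy -- assume a compact leaf $L$ of $\mD_j$, use Lemma~\ref{L-barS=S} to identify $\overline{\rm S}_{\,\mD_1,\ldots,\mD_k}$ with ${\rm S}_{\,\mD_1,\ldots,\mD_k}$, kill all torsion terms via Lemma~\ref{L-mixed-YU-TG}, and integrate the divergence identity \eqref{E-PW3-k} over $L$ -- is exactly the paper's, but the execution has a genuine gap at the leafwise-divergence step. For a vector field $X\in\mD_j$ (hence tangent to $L$), the discrepancy between the ambient and the intrinsic divergence is governed by the mean curvature of the \emph{normal} distribution $\mD_j^\bot$, not by $H_j$: one has $\Div_L X=\Div X+\<X,H_j^\bot\>$ (this is the identity behind \eqref{E-divN} as it is used in the paper), and under the hypotheses $H_j^\bot=\sum_{i\ne j}H_i$ need not vanish, so the correction does not disappear just because $H_j=0$; for $\xi=\sum_i(H_i+H_i^\bot)$ one gets $\<\xi,H_j^\bot\>=2\,\|H_j^\bot\|^2$. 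Consequently your displayed identity $0=\int_L\big(2\,{\rm S}_{\,\mD_1,\ldots,\mD_k}-\sum_i Q(\mD_i,g)\big)\,{\rm d}\vol_L$ is unjustified: what the divergence theorem on the closed leaf kills is $\int_L\Div_L\xi\,{\rm d}\vol_L$, not $\int_L\Div\xi\,{\rm d}\vol_L$. (You also leave unresolved what $H_i^\bot$ is; under the hypotheses $H_i^\bot=0$ for $i\ne j$ and $H_j^\bot=\sum_{i\ne j}H_i\in\mD_j$, so $\xi$ is indeed tangent to $L$.)

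Because you drop that correction, you are forced to prove $\sum_i Q(\mD_i,g)\le0$ on $L$, and this is false in general under the stated hypotheses: with all torsions zero, $H_j=0$ and $H_i^\bot=0$ for $i\ne j$, formula \eqref{E-func-Q} gives $\sum_i Q=\|H_j^\bot\|^2+\sum_{i\ne j}\|H_i\|^2-\sum_i(\|h_i\|^2+\|h_i^\bot\|^2)$, and already totally umbilical, non-geodesic $\mD_i$ make $\|H_i\|^2-\|h_i\|^2>0$. Your closing appeal to the umbilical identity from Theorem~\ref{C-Step3} does not apply here (umbilicity is not assumed), the remark that $H_j^\bot$ ``lies in $\mD_j$'' is vacuous (it always does), and in any case that identity would push the sign the wrong way for what you need. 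The paper's proof avoids any sign condition on $\sum_i Q$: it keeps the leafwise corrections, writing $\Div_L H_i=\Div H_i+\|H_i\|^2$ for $i\ne j$ and $\Div_L H_i^\bot=\Div H_i^\bot+\|H_i^\bot\|^2$, so that after substituting \eqref{E-PW3-k} the mean-curvature squares in $\sum_i Q$ are cancelled by these corrections and one is left with $\Div_L\xi=2\,\overline{\rm S}_{\,\mD_1,\ldots,\mD_k}+\sum_i(\|h_i\|^2+\|h_i^\bot\|^2)>0$, whose integral over the compact leaf must vanish -- the contradiction. To repair your write-up you must reinstate the term $\<\xi,H_j^\bot\>$ and let it do the cancelling, rather than trying to prove the (false) pointwise inequality $\sum_i Q(\mD_i,g)\le0$.
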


\begin{proof}
By conditions, we have $H_i^\bot\in\mD_j$ for all $i$.
Assume that $\mD_j$ has a compact leaf $L$.
By \eqref{E-divN}, we have $\Div_{L} H_i = \Div H_i +\|H_i\|^2$ for $i\ne j$
and $\Div_{L} H_i^\bot = \Div H_i^\bot +\|H_i^\bot\|^2$ for all $i$.
Thus,
\begin{eqnarray*}
 \Div_{L}\big(\sum\nolimits_{\,i\ne j} H_{\,i}+\sum\nolimits_{\,i} H^\bot_{\,i}\big)
 = \Div\big(\sum\nolimits_{\,i\ne j} H_{\,i}+\sum\nolimits_{\,i}H^\bot_{\,i}\big) \\
 + \sum\nolimits_{\,i\ne j}\|H_{\,i}\|^2+\sum\nolimits_{\,i}\|H^\bot_{\,i}\|^2.
\end{eqnarray*}
Therefore, integrating \eqref{E-Q1Q2-gen} along $L$ and using Lemma~\ref{L-barS=S}, we get
\begin{eqnarray*}
 0 \eq \int_L \Div_{L}\big( \sum\nolimits_{\,i\ne j}H_{\,i}+\sum\nolimits_{\,i} H^\bot_{\,i}\big)\, d\vol_L\\
 \eq \int_L \big(2\,\overline{\rm S}_{\,\mD_1,\ldots,\mD_k} + \sum\nolimits_{\,i} (\|h_{\,i}\|^2+\|h^\bot_{\,i}\|^2) \big)\, d\vol_L > 0.
\end{eqnarray*}
The obtained contradiction completes the proof.
\end{proof}

\section{The Einstein-Hilbert type action}
\label{sec:EH-action}

Here, we consider smooth $1$-parameter variations $g_t,\ |t|<\eps$, of a pseudo-Riemannian metric $g = g_0$
and a linear connection $\bar\nabla^t=\nabla+\I^t$, and derive Euler-Lagrange equations for the Einstein-Hilbert type action.
In Section~\ref{sec:contorsion_zero} we assume that $\I^t\equiv\I^0$ correspond to statistical connections
(in particular, $\bar\nabla^t\equiv\nabla$).
In Section~\ref{sec:contorsion_semi_symmetric} we assume that $\I^t$ correspond to semi-metric connections.

Let infinitesimal variations
 ${B}(t)\equiv\partial g_t/\partial t$,
be supported in a relatively compact domain $\Omega$,
i.e., $g_t=g$ and the symmetric tensor ${B}_t$ va\-nishes outside $\Omega$ for all $t$.
A variation $g_t$ is called \textit{volume-preserving} if ${\rm Vol}(\Omega,g_t) {=} {\rm Vol}(\Omega,g)$ for all~$t$.
 We~adopt the notations $\partial_t \equiv \partial/\partial t,\ {B}\equiv{\dt g_t}_{\,|\,t=0}=\dot g$,
and
write $B$ instead of $B_t$ to make formulas easier to read, where it is not confusing.
We~have
\begin{equation}\label{E-dotvolg}
 \partial_t\,\big({\rm d}\vol_{g}\!\big) = \frac12\,(\tr_{g} B)\,{\rm d}\vol_{g} = \frac12\,\<B,\,g\>\,{\rm d}\vol_{g}.
\end{equation}
Differentiating \eqref{Eq-div} and using \eqref{E-dotvolg}, we obtain
\begin{equation}\label{dtdiv}
 \dt\,(\Div X) = \Div (\dt X) +\frac{1}{2}\,X(\tr_{g} B)
\end{equation}
for any
$t$-dependent vector field $X\in\mathfrak{X}_M$.
By \eqref{E-dotvolg}, \eqref{dtdiv} and the Divergence Theorem, we have
\begin{equation}\label{E-DivThm-2}
 \frac{d}{dt}\int_M (\Div X)\,{\rm d}\vol_g = \int_M \Div\big(\dt X+\frac12\,(\tr_g B) X\big)\,{\rm d}\vol_g = 0
\end{equation}
for any variation $g_t$ with ${\rm supp}\,(\dt g)\subset\Omega$,
and $t$-dependent $X\in\mathfrak{X}_M$ with ${\rm supp}\,(\dt X)\subset\Omega$.

\subsection{Adapted variations of metrics}
\label{sec:contorsion_zero}

A family of metrics
 $\{g_t\in{\rm Riem}(M,\mD_1,\ldots\mD_k):\, |t|<\eps\}$
will be called an \textit{adapted variation}.
In~other words, $\mD_i$ and $\mD_j$ are $g_t$-orthogonal for all $i\ne j$ and all~$t$.
An~adapted variation $g_t$ will be called a $\mD_j$-\textit{variation} (for some $j\le k$) if
the metric changes along $\mD_j$ only, i.e.,
\[
 g_t(X,Y)=g_0(X,Y),\quad X,Y\in\mD_j^\bot,\quad |t|<\eps .
\]
For an adapted variation of metric we have $g_t=g_1(t)\oplus\ldots\oplus g_k(t)$, where $g_j(t) =g_t|_{\,\mD_j}$ are $\mD_j$-variations.
In this case, the tensor $B_t=\dt\, g_t$
on $(M;\mD_1,\ldots,\mD_k)$ is decomposed as
$B_t=\sum_{\,j} {B}_j(t)$, where ${B}_j(t) =\dt g_j(t) =B_t|_{\,\mD_j}$.
Define a self-adjoint $(1,1)$-tensor ${\cal K}_i=\sum\nolimits_{\,n_{\,i-1}<a\,\le n_i}\eps_{\,a}\,[(\T_i)_{E_a},\, (A_i)_{E_a}]$ using the Lie bracket,
For any $(1,2)$-tensors $P_1,P_2$ and a $(0,2)$-tensor $S$
define the $(0,2)$-tensor $\Upsilon_{P_1,P_2}$~by
\[
 \<\Upsilon_{P_1,P_2}, S\> =  \sum\nolimits_{\,\lambda, \mu} \eps_\lambda\, \eps_\mu\,
 \big[S(P_1(e_{\lambda}, e_{\mu}), P_2( e_{\lambda}, e_{\mu})) + S(P_2(e_{\lambda}, e_{\mu}), P_1( e_{\lambda}, e_{\mu}))\big],
\]
where on the left-hand side we have the inner product of $(0,2)$-tensors induced by $g$,
$\{e_{\lambda}\}$ is a local orthonormal basis of $TM$ and $\eps_\lambda = \<e_{\lambda}, e_{\lambda}\>\in\{-1,1\}$.

\begin{remark}\rm
If $g>0$ then $\Upsilon_{\,h_i,h_i}=0$ if and only if $h_i=0$. Indeed, for any $X\in\mD_i$ we~have
\begin{equation*}
 \<\Upsilon_{\,h_i,h_i}, X^\flat\otimes X^\flat\> = 2\sum\nolimits_{\,a,b} \<X, h_i(E_{a}, E_{b})\>^{2},
\end{equation*}
where $\otimes$ denotes the product of tensors.
The above sum is equal to zero if and only if every summand vanishes. This yields $h_i=0$.
Thus, $\Upsilon_{\,h_i,h_i}$ is a ``measure of non-total geodesy" of $\mD_i$.
Similarly,
$\Upsilon_{\,T_i,T_i}$ can be viewed as a ``measure of non-integrability" of $\mD_i$.
\end{remark}

By \eqref{E-Dk-Smix}, in order to derive Euler-Lagrange equations for \eqref{Eq-Smix-g}, we need variations of ${\rm S}_{\,\mD_i,\mD_i^\bot}$,
for which we can use variations of six terms in \eqref{E-func-Q} with $\mD=\mD_i$, given in the following.

\begin{lemma}\label{propvar1}
If $g_t$ is a $\mD_j$-variation of a metric $g\in{\rm Riem}(M,\mD_1,\ldots\mD_k)$, then
\begin{eqnarray}
\nonumber
\label{E-tildeh-gen}
 &&\hskip-9mm
 \dt\<{h}_j^\bot, {h}_j^\bot\> = -\< (1/2)\Upsilon_{h_j^\bot,h_j^\bot},\ B_j\>,\\
\nonumber
 &&\hskip-9mm \dt \<h_j,\ h_j\> = \<\,\Div{h}_j + {\cal K}_j^\flat,\ B_j\> - \Div\<h_j, B_j\>,\\
\nonumber
 &&\hskip-9mm \dt \<{H}_j^\bot, {H}_j^\bot\> = -\<\,{H}_j^\bot\otimes{H}_j^\bot,\ B_j\>,\\
\nonumber
 &&\hskip-9mm \dt \<H_j, H_j\> = \<\,(\Div H_j)\,g_j,\ B_j\> -\Div((\tr_{\,\mD_j} B_j^\sharp) H_j), \\
\nonumber
 &&\hskip-9mm \dt\<{T}_j^\bot, {T}_j^\bot\> = \<\,(1/2)\Upsilon_{T_j^\bot,T_j^\bot},\ B_j\>,\\
 &&\hskip-9mm \dt\<T_j,\ T_j\> = \< 2\,{\cal T}_j^\flat,\ B_j\> ,
\end{eqnarray}
and for $i\ne j$ (when $k>2$) we have dual equations
\begin{eqnarray}
\nonumber
 &&\hskip-9mm
 \dt\<{h}_i, {h}_i\> =  \<-(1/2)\Upsilon_{h_i,h_i},\ B_j\>,\\
\nonumber
\label{E-h-gen}
 &&\hskip-9mm \dt \<h_i^\bot,\ h_i^\bot\> = \<\,\Div{h}_i^\bot + ({\cal K}_i^\bot)^\flat,\ B_j\> - \Div\<h_i^\bot, B_j\>,\\
\nonumber
 &&\hskip-9mm \dt \<{H}_i, {H}_i\> = -\<\,{H}_i\otimes{H}_i,\ B_j\>,\\
\nonumber
 &&\hskip-9mm \dt \<H_i^\bot, H_i^\bot\> = \<\,(\Div H_i^\bot)\,g_j,\ B_j\>
 -\Div((\tr_{\,\mD_i^\bot} B_j^\sharp) H_i^\bot), \\
\nonumber
 &&\hskip-9mm \dt\<{T}_i, {T}_i\> = \<\,(1/2)\Upsilon_{T_i,T_i},\ B_j\>,\\
 &&\hskip-9mm \dt\<T_i^\bot,\ T_i^\bot\> = \< 2\,({\cal T}_i^\bot)^\flat,\ B_j\> .
\end{eqnarray}
\end{lemma}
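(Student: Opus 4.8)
The plan is to compute each of the twelve derivatives by reducing to a single master computation: the first variation of the shape operator and integrability tensor of a distribution under a metric variation, and then tracing. Concretely, for a $\mD_j$-variation $g_t$ the tensor $B_j=\dot g_j$ lives on $\mD_j$, so I would first express each scalar $\<h_j,h_j\>$, $\<H_j,H_j\>$, $\<T_j,T_j\>$ etc. in a local adapted $g$-orthonormal frame and differentiate, being careful that the frame itself is $t$-dependent (one fixes a coordinate frame and $g$-orthonormalizes, or equivalently uses $\dot\eps_a=0$ plus the standard formula $\dot E_a = -\tfrac12 B_j^\sharp E_a$ for $E_a\in\mD_j$). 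The key analytic inputs are: $\partial_t(\nabla^t_XY)$ is a tensor whose contraction gives the familiar $\tfrac12(\nabla_XB + \nabla_YB - \dots)$ expression; the projector $P_j^\bot$ is unchanged to first order when the variation is adapted (this is why $\mD_j$-variations are so convenient — cross terms between $\mD_j$ and $\mD_j^\bot$ in $\dot g$ are zero), so $\dot h_j = P_j^\bot(\partial_t\nabla)|_{\mD_j}$ and the $\mD_j^\bot$-part of the metric stays fixed.

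Next I would organize the twelve identities into the six ``$\mD_j$-terms'' and six ``dual'' terms, and observe that the dual block for $i\ne j$ is formally obtained from the primary block by the substitution $\mD_j\leftrightarrow\mD_j^\bot$ relative to the index $i$ — i.e., from $i$'s point of view $\mD_j\subset\mD_i^\bot$, so $h_i^\bot$ restricted to $\mD_j$ plays the role that $h_j$ played before, $H_i^\bot$ that of $H_j$, and so on. Hence I only need to prove the six primary formulas carefully and then transcribe. Within the primary block, the three ``$h$'' and ``$T$'' identities follow from differentiating $\<h_j,h_j\>=\tr\,\mathcal A_j$ and $\<T_j,T_j\>=-\tr\,\mathcal T_j$ and using that $\dot\eps=0$: the $B_j$-linear terms assemble into $\Div h_j + \mathcal K_j^\flat$ (the $\mathcal K_j$ arising exactly from the Lie-bracket commutator of $A_j$ and $T_j^\sharp$ that appears when one commutes $\partial_t$ past $\nabla$), and into $2\mathcal T_j^\flat$; the divergence term $-\Div\<h_j,B_j\>$ is the total-derivative remainder one always gets from integrating by parts the $\nabla B_j$ piece. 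The three identities with $H_j^\bot, H_j, T_j^\bot$ involving $\Upsilon$ are the purely algebraic ones: $\<H_j^\bot,H_j^\bot\>$ only sees $B_j$ through lowering the frame on $\mD_j$ inside $H_j^\bot = \sum_a\eps_a P_j^\bot\nabla_{E_a}E_a$, giving directly $-\<H_j^\bot\otimes H_j^\bot, B_j\>$, and similarly $\<h_j^\bot,h_j^\bot\>$ and $\<T_j^\bot,T_j^\bot\>$ pick up only the ``measure'' tensors $\Upsilon_{h_j^\bot,h_j^\bot}$ and $\Upsilon_{T_j^\bot,T_j^\bot}$ defined just above, with signs fixed by whether one is differentiating a symmetric ($h^\bot$) or skew ($T^\bot$) object.

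The main obstacle I expect is bookkeeping rather than conceptual: correctly tracking which terms are genuine $(0,2)$-tensors paired with $B_j$ versus which are total divergences (the statement is pedantic about keeping $-\Div\<h_j,B_j\>$, $-\Div((\tr_{\mD_j}B_j^\sharp)H_j)$ etc. explicit, since these vanish on closed manifolds but matter for the Euler–Lagrange derivation), and getting every sign and factor of $\tfrac12$ right in the $\Upsilon$ terms — especially distinguishing the symmetric case ($h$) where the polarization produces $+$ and the skew case ($T$) where it produces the opposite sign. A secondary subtlety is justifying that for $i\ne j$ no extra terms appear from $\mD_i$'s own geometry: this is where one invokes that the variation is adapted (so $B$ has no $\mD_i$-component for $i\ne j$) and that $P_i,P_i^\bot$ therefore vary only through their dependence on $g|_{\mD_j}$, which is captured entirely by the stated dual formulas. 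Once the primary six are in hand the rest is the transcription described above.
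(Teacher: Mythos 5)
There is a genuine gap: for the three identities that carry the real content of the lemma, namely $\dt\<h_j,h_j\>$, $\dt\<H_j,H_j\>$ and $\dt\<T_j,T_j\>$ (and their duals), you assert the answer rather than derive it. Saying that ``the $B_j$-linear terms assemble into $\Div h_j+{\cal K}_j^\flat$ \dots and into $2\,{\cal T}_j^\flat$'' is precisely the statement to be proved; to establish it one must actually insert the first variation of the Levi-Civita connection, $2\<\dt\nabla_XY,Z\>=(\nabla_XB_j)(Y,Z)+(\nabla_YB_j)(X,Z)-(\nabla_ZB_j)(X,Y)$, use Weingarten-type identities to convert the terms $B_j(\,\cdot\,,P_j\nabla_X Z)$ into expressions in $(A_j)_Z$ and $(T_j^\sharp)_Z$ (this is where the commutator $[(T_j^\sharp)_{E_a},(A_j)_{E_a}]$, i.e.\ ${\cal K}_j$, comes from), and integrate the $\mD_j^\bot$-derivatives of $B_j$ by parts to isolate exactly the remainders $-\Div\<h_j,B_j\>$ and $-\Div((\tr_{\,\mD_j}B_j^\sharp)H_j)$. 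Your closing remark that the remaining obstacle is ``getting every sign and factor of $\tfrac12$ right'' defers the entire substance: those signs and coefficients are what later enter the tensors ${\cal Q}(\mD_j,g)$ and the Euler--Lagrange equations, so a proof cannot leave them unverified. There is also a small slip: the displayed expression $\sum_a\eps_a P_j^\bot\nabla_{E_a}E_a$ is $H_j$, not $H_j^\bot$; the correct mechanism for $\dt\<H_j^\bot,H_j^\bot\>$ is that the $1$-form $\<H_j^\bot,\cdot\>|_{\mD_j}$ is $t$-independent (a Koszul-formula check), so only the inverse of $g|_{\mD_j}$ varies, which does yield $-\<H_j^\bot\otimes H_j^\bot,B_j\>$.

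For comparison, the paper does not recompute anything: its proof consists of citing \cite[Proposition 2]{rz-2} for the pair $(\mD_j,\mD_j^\bot)$ and observing that the second block is dual to the first. The parts of your proposal that are actually argued are sound and align with that route: the reduction of the case $i\ne j$ to the pair $(\mD_i,\mD_i^\bot)$ by noting that a $\mD_j$-variation is a special variation supported on $\mD_j\subset\mD_i^\bot$ is exactly the paper's duality step, and your sign reasoning for the purely algebraic $\Upsilon$-identities (flat part constant for the symmetric objects versus the metric-independent integrability tensors) is correct. But as a standalone blind proof, what you have is a plausible plan for reproving \cite[Proposition 2]{rz-2}, not a proof of it; the lemma's hardest assertions remain unestablished.
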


\begin{proof}
The equations \eqref{E-tildeh-gen} coincide with equations from \cite[Proposition 2]{rz-2} for $(\mD_j,\mD_j^\bot)$,
and equations \eqref{E-h-gen} are dual to \eqref{E-tildeh-gen}.
\end{proof}

\begin{remark}\rm
We cannot vary \eqref{eqvarstat} with respect to metric with fixed $\I$, because when $g$ changes, $\I$ may no longer correspond to statistical connections. 
Instead, we use
variations of \eqref{E-Q1Q2-gen} for $\I$
(see \cite{rz-3} for $k=2$)
corresponding to a statistical connection, for which $\I = \I^* = \I^\wedge$ and~$\Theta=0$.
\end{remark}

By \eqref{E-Dk-Smix}, we need variations of $\overline{\rm S}_{\,\mD_i,\mD_i^\bot}$, for which we can use \eqref{E-div-barQ} with $\mD=\mD_i$.
Variations of terms in \eqref{E-div-barQ} with $\mD=\mD_i$ are given in the following.

\begin{lemma}\label{P-dT-3}
If $g_t$ is a $\mD_j$-variation of a metric $g\in{\rm Riem}(M,\mD_1,\ldots\mD_k)$ on $M$ with a fixed statistical connection $\bar\nabla=\nabla+\I$, then
for $n_{j-1}<a,b\,\le n_j$ and $B_j=\dt g$ we have
\begin{eqnarray}\label{Eq-stat-var}
\nonumber
 &&\hskip-0mm \dt \<\I^*,\ \I^\wedge\>_{\,|\,V(\mD_j)} = -\sum B_j(E_a, E_b) \<\I_{E_a} , \I_{E_b}\>_{\,|\mD_j^\bot} ,\\
\nonumber
 &&\hskip-0mm \dt \<\tr_{\,\mD_j^\bot}\I,\ \tr_{\,\mD_j}\I^*\>
 =0, \\
\nonumber
 &&\hskip-0mm \dt \<\tr_{\,\mD_j}\I^*,\ \tr_{\,\mD_j^\bot}\I\> = -\sum B_j(E_a, E_b)\<\I_{E_a} E_b,\ \tr_{\,\mD_j^\bot}\I \> ,\\
\nonumber
 &&\hskip-0mm \dt \<\Theta, A_j\> = -2 \sum B_j(E_a, E_b) \< (A_j)_{E_a},\, \I_{E_b}\> , \\
\nonumber
 &&\hskip-0mm \dt \<\Theta, T_j^\sharp\> = -2 \sum B_j(E_a, E_b)\< (T^\sharp_j)_{E_a},\, \I_{E_b}\> ,\\
\nonumber
 &&\hskip-0mm \dt \<\Theta, T_j^{\bot\sharp}\>
 =-6\sum B_j(E_a, E_b)\<T_j^\bot(E_b,\,\cdot),\, \I_{E_a}\> ,\\
\nonumber
 &&\hskip-0mm \dt \<\Theta, A_j^\bot \>
 = 4\sum B_j(E_a, E_b)\< h_j^\bot(E_b,\,\cdot),\, \I_{E_a}\>,\\
\nonumber
 &&\hskip-0mm \dt \<\tr_{\,\mD_j}(\I^*-\I),\, H_j^\bot - H_j\> = \sum B_j(E_a, E_b) \big(\<\I_{E_b}\,E_a, H_j^\bot - H_j\> \\
\nonumber
 &&\quad +\<\tr_{\,\mD_j}\I, E_a\>\< H_j, E_b\> \big) , \\
 &&\hskip-0mm \dt\<\,\tr_{\,\mD_j^\bot}(\I^*-\I),\, H_j^\bot - H_j\> =\sum B_j(E_a, E_b) \<\tr_{\,\mD_j^\bot}\I,  E_b\> \<H_j, E_a\> ,
\end{eqnarray}
and for $i\ne j$ (when $k>2$) we have dual equations
\begin{eqnarray}\label{Eq-stat-var-dual}
\nonumber
 &&\hskip-0mm \dt \<\I^*,\ \I^\wedge\>_{\,|\,V(\mD_i^\bot)} = -\sum B_j(E_a, E_b) \<\I_{E_a} , \I_{E_b}\>_{\,|\mD_i} ,\\
\nonumber
 &&\hskip-0mm \dt \<\tr_{\,\mD_i}\I,\ \tr_{\,\mD_i^\bot}\I^*\> =0, \\
\nonumber
 &&\hskip-0mm \dt \<\tr_{\,\mD_i^\bot}\I^*,\ \tr_{\,\mD_i}\I\> = -\sum B_j(E_a, E_b)\<\I_{E_a} E_b,\ \tr_{\,\mD_i}\I \> ,\\
\nonumber
 &&\hskip-0mm \dt \<\Theta, A_i^\bot\> = -2 \sum B_j(E_a, E_b) \< (A_i^\bot)_{E_a},\, \I_{E_b}\> , \\
\nonumber
 &&\hskip-0mm \dt \<\Theta, T_i^{\bot\sharp}\> = -2 \sum B_j(E_a, E_b)\< (T^{\bot\sharp}_i)_{E_a},\, \I_{E_b}\> ,\\
\nonumber
 &&\hskip-0mm \dt \<\Theta, T_i^\sharp\>
 = -6\sum B_j(E_a, E_b)\<T_i(E_b,\,\cdot),\, \I_{E_a}\>,\\
\nonumber
 &&\hskip-0mm \dt \<\Theta, A_i\>
 =4\sum B_j(E_a, E_b)\< h_i(E_b,\,\cdot),\, \I_{E_a}\> , \\
\nonumber
 &&\hskip-0mm \dt \<\tr_{\,\mD_i^\bot}(\I^*-\I),\, H_i - H_i^\bot\> = \sum B_j(E_a, E_b) \big(\<\I_{E_b}\,E_a, H_i - H_i^\bot\> \\
\nonumber
 &&\quad +\<\tr_{\,\mD_i^\bot}\I, E_a\>\< H_i^\bot, E_b\>\big) ,\\
 &&\hskip-0mm \dt\<\,\tr_{\,\mD_i}(\I^*-\I),\, H_i - H_i^\bot\> =\sum B_j(E_a, E_b) \<\tr_{\,\mD_i}\I, E_b\> \<H_i^\bot, E_a\> .
\end{eqnarray}
\end{lemma}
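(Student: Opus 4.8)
The plan is to compute the $t$-derivative at $t=0$ of each scalar expression appearing in the auxiliary function $\bar Q(\mD_j,g,\I)$ of \eqref{E-barQ}, using only the fact that $\I$ is fixed as a $(1,2)$-tensor while $g$ varies along $\mD_j$ through a $\mD_j$-variation, so that $B=B_j$ is supported on $\mD_j\times\mD_j$. The essential input is the variation of the metric-dependent objects built from $g$: the inverse metric, the orthoprojectors $P_j,P_j^\bot$, the induced inner products $\<\cdot,\cdot\>$ on tensors, and the geometric tensors $h_j,H_j,T_j^\sharp$ (and their $\bot$-counterparts). Since $\I$ itself does not depend on $t$, every derivative that lands on a ``pure'' contorsion term (like $\<\I^*,\I^\wedge\>_{|V(\mD_j)}$ or $\<\tr_{\,\mD_j}\I,\tr_{\,\mD_j^\bot}\I^*\>$) only sees the change in how indices are raised and traced; in particular a trace over $\mD_j^\bot$ paired against a $\mD_j^\bot$-valued quantity is insensitive to a $\mD_j$-variation, which is why $\dt\<\tr_{\,\mD_j^\bot}\I,\tr_{\,\mD_j}\I^*\>=0$. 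The sign and multiplicity in each formula comes from counting how many of the slots in the relevant tensor lie in $\mD_j$ and hence feel the factor $\dt g^{\sharp}=-B_j^\sharp$ (or $\dt g=B_j$ from a lowered index).

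First I would record the basic variation formulas. Writing $g_t=g+tB+o(t)$ with $B=B_j$ supported on $\mD_j$, one has $\dt(g^{-1})=-B^\sharp$ (as a $(2,0)$-tensor), $\dt(\tr_{\,\mD_j}\,\cdot\,)$ picks up a $-B_j^\sharp$ contraction on the $\mD_j$-trace, and crucially the orthoprojectors are stationary, $\dt P_j=\dt P_j^\bot=0$, because $\mD_j$ and $\mD_j^\bot$ remain $g_t$-orthogonal for an adapted variation. Next, using $2h_j(X,Y)=P_j^\bot(\nabla_XY+\nabla_YX)$ and the first variation of the Levi-Civita connection $2\<\dt(\nabla_XY),Z\>=(\nabla_XB)(Y,Z)+(\nabla_YB)(X,Z)-(\nabla_ZB)(X,Y)$, one computes $\dt h_j$, $\dt H_j$, $\dt T_j^\sharp$ and the dual objects; these are exactly the computations already performed in \cite[Proposition 2]{rz-2} for the pair $(\mD_j,\mD_j^\bot)$, so they may be quoted. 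Then I would substitute these into each bilinear expression in \eqref{E-barQ}, always re-expressing the result as $\sum B_j(E_a,E_b)(\ldots)$ over an adapted orthonormal frame with $n_{j-1}<a,b\le n_j$, which is the normal form displayed in the statement. The seven ``dual'' equations \eqref{Eq-stat-var-dual} for $i\ne j$ follow from \eqref{Eq-stat-var} by the formal substitution $(\mD_j,\mD_j^\bot)\leftrightarrow(\mD_i^\bot,\mD_i)$ together with the observation that a $\mD_j$-variation acts on $\mD_i^\bot$ exactly as a piece of that complement, so no new computation is needed.

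The main obstacle is the careful bookkeeping in the mixed terms $\<\Theta,\,A_j^\bot-T_j^{\bot\sharp}+A_j-T_j^\sharp\>$: here the tensor $\Theta=\I-\I^*+\I^\wedge-\I^{*\wedge}$ contributes the factors $\pm2,\pm4,\pm6$ seen in the statement, which arise from the combination of (i) the variation of the metric contractions used in forming $\<\cdot,\cdot\>_{|V(\mD_j)}$, (ii) the variation of the shape-type operators $A_j,T_j^\sharp$ themselves (whose first variations are again metric-connection variations), and (iii) the antisymmetrization built into $T_j^{\bot\sharp}$ versus the symmetrization in $A_j^\bot$. Getting the coefficient and sign right for $\dt\<\Theta,T_j^{\bot\sharp}\>=-6\sum B_j(E_a,E_b)\<T_j^\bot(E_b,\cdot),\I_{E_a}\>$ versus $\dt\<\Theta,A_j^\bot\>=4\sum B_j(E_a,E_b)\<h_j^\bot(E_b,\cdot),\I_{E_a}\>$ is the delicate step; everything else reduces to the elementary variation rules above and the chain rule.
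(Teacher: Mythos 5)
Your route is genuinely different from the paper's: the paper proves Lemma~\ref{P-dT-3} in one line, by observing that the identities \eqref{Eq-stat-var} are the special case (adapted variations) of \cite[Lemma 3]{rz-3} applied to the pair $(\mD_j,\mD_j^\bot)$, and that \eqref{Eq-stat-var-dual} follows by duality; you instead propose to re-derive the $k=2$ identities from scratch via the first-variation calculus ($\dt g^{-1}=-B_j^\sharp$, stationarity of $P_j,P_j^\bot$ under adapted variations, the first variation of the Levi-Civita connection, and the formulas of \cite[Proposition 2]{rz-2} for $\dt h_j,\dt H_j,\dt T_j^\sharp$). That strategy is legitimate and is essentially how the cited result is obtained, and your treatment of the dual equations (swap $(\mD_j,\mD_j^\bot)\leftrightarrow(\mD_i^\bot,\mD_i)$, noting that a $\mD_j$-variation only moves the metric inside $\mD_i^\bot$) matches the paper's duality step.

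However, as written the proposal has a real gap: the entire content of the lemma is the list of precise right-hand sides, and you explicitly defer the decisive computations (the coefficients $-2$, $+4$, $-6$ in the $\<\Theta,\cdot\>$ terms) as ``the delicate step'' without indicating how they arise, so nothing in the plan certifies those signs and multiplicities. Two further points need to be made explicit for the computation to go through. First, the statistical condition $\I=\I^*=\I^\wedge$ (hence $\Theta=0$ at $t=0$, while $\Theta_t\neq0$ for $t\neq0$ because $\I^*$ and $\I^\wedge$ are defined through $g_t$) is what allows every right-hand side to be written in terms of $\I$ alone; your sketch treats $\I^*$ as merely ``index raising'' but never invokes this symmetry. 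Second, your stated reason for $\dt\<\tr_{\,\mD_j^\bot}\I,\ \tr_{\,\mD_j}\I^*\>=0$ --- that a $\mD_j^\bot$-trace paired with a $\mD_j^\bot$-valued quantity is insensitive to a $\mD_j$-variation --- is not accurate: $\tr_{\,\mD_j}\I^*$ and the pairing both vary (through the $\mD_j$-frame, through $\I^*$, and through $g_t$), and the identity holds because the contribution from varying the $\mD_j$-trace cancels the contribution from varying $\I^*$; the conclusion is right, but the mechanism is a cancellation, not insensitivity. Either carry out these computations (or at least the representative ones fixing the coefficients), or do what the paper does and quote \cite[Lemma 3]{rz-3} directly.
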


\begin{proof}
Equations \eqref{Eq-stat-var} are a special case (i.e., for adapted variations of metric) of equations from \cite[Lemma 3]{rz-3} for $(\mD_j,\mD_j^\bot)$,
and equations \eqref{Eq-stat-var-dual} are dual to \eqref{Eq-stat-var}.
\end{proof}

From Lemmas~\ref{propvar1} and \ref{P-dT-3} we obtain the following.

\begin{proposition}\label{C-vr-terms}
 For any $\mD_j$-variation of metric $g\in{\rm Riem}(M,\mD_1,\ldots,\mD_k)$
 with a fixed statistical connection $\bar\nabla=\nabla+\I$ we have
\begin{eqnarray}\label{E-dt-barQ}
 \dt\sum\nolimits_{\,i}\bar Q(\mD_i,g_t,\I) \eq\<\bar{\cal Q}(\mD_j,g_t,\I),\,B_j\>\, ,\\
\label{E-dt-Q}
 \dt\sum\nolimits_{\,i} Q(\mD_i,g) \eq \<{\cal Q}(\mD_j,g),\, B_j\> - \Div X_j\,,
\end{eqnarray}
where ${B}_j={\dt g_t}_{\,|\,t=0}$ 
and vector fields $X_j$ and (0,2)-tensors ${\cal Q}(\mD_j,g)$ and $\bar{\cal Q}(\mD_j,g,\I)$ are given~by
\begin{eqnarray*}
 &&\hskip-4mm 2 X_j=\<h_j, B_j\> -(\tr B_j^\sharp) H_j+\sum\nolimits_{\,i\ne j}\,(\<h_i^\bot, B_j\> -(\tr B_j^\sharp) H_i^\bot),\\
 &&\hskip-4mm {\cal Q}(\mD_j,g)=
 -\Div{h}_j -{\cal K}_j^\flat -{H}_j^\bot\otimes{H}_j^\bot +\frac12\Upsilon_{h_j^\bot,h_j^\bot} +\frac12\Upsilon_{T_j^\bot,T_j^\bot}
 +\,2\,{\cal T}_j^\flat + (\Div H_j)\,g_j \\
 && +\sum\nolimits_{\,i\ne j}\big(-\Div{h}_i^\bot {-} ({\cal K}_i^\bot)^\flat {-}{H}_i\otimes{H}_i {+}\frac12\Upsilon_{h_i,h_i} {+}\frac12\Upsilon_{T_i,T_i}
 +\,2\,({\cal T}_i^\bot)^\flat +(\Div H_i^\bot)\,g_j\big), \\
 &&\hskip-4mm \bar{\cal Q}(\mD_j,g,\I)({E}_a,{E}_b) =
 \<\I_{{E}_a}, \I_{{E}_b}\>_{\,|\mD_j^\bot} -\<\I_{{E}_a} {E}_b,\,\tr_{\,\mD_j^\bot}\I\> -2\,\<(A_j)_{{E}_a},\,\I_{{E}_b}\> \\
 && +\, 2\< (T^\sharp_j)_{{E}_a},\, \I_{{E}_b}\> + 6\,\<T_j^\bot({E}_b,\,\cdot),\, \I_{{E}_a}\> + 4\,\< h_j^\bot({E}_b,\,\cdot),\, \I_{{E}_a}\> \\
 && +\,\<\I_{{E}_b} {E}_a, H_j^\bot - H_j\> +\<\tr_{\,\mD_j}\I, {E}_a\>\< H_j, {E}_b\> -\<\tr_{\,\mD_j^\bot}\I, {E}_b\> \<H_j, {E}_a\> \\
 && +\sum\nolimits_{\,i\ne j}\big(\<\I_{{E}_a} , \I_{{E}_b}\>_{\,|\mD_i} -\<\I_{{E}_a} {E}_b,\,\tr_{\,\mD_i}\I\> -2\,\<(A_i^\bot)_{{E}_a},\, \I_{{E}_b}\>  \\
 && +\,2\,\< (T^{\bot\sharp}_i)_{{E}_a},\, \I_{{E}_b}\> + 6\,\< T_i({E}_b,\,\cdot),\, \I_{{E}_a}\> + 4\,\< h_i({E}_b,\,\cdot),\, \I_{{E}_a}\> \\
 && +\,\<\I_{{E}_b} {E}_a, H_i - H_i^\bot\> +\<\tr_{\,\mD_i^\bot}\I, {E}_a\>\< H_i^\bot, {E}_b\> -\<\tr_{\,\mD_i}\I, {E}_b\> \<H_i^\bot, {E}_a\>\big).
\end{eqnarray*}
\end{proposition}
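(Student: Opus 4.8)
The plan is to assemble the two displayed formulas \eqref{E-dt-barQ} and \eqref{E-dt-Q} by collecting, term by term, the variation formulas from Lemmas~\ref{propvar1} and \ref{P-dT-3}, together with their dual versions, and packaging the results in the $(0,2)$-tensor notation $\<\cdot\,,B_j\>$. First I would treat \eqref{E-dt-Q}. By \eqref{E-PW3-k} (or directly by \eqref{E-Dk-Smix} and \eqref{E-func-Q}) the quantity $\sum_i Q(\mD_i,g)$ is a sum over all $i$ of the six terms in \eqref{E-func-Q}. For a fixed $\mD_j$-variation one differentiates each of these $6k$ scalar functions: when the term involves $\mD_j$ (namely $\<H_j,H_j\>$, $\<h_j,h_j\>$, $\<T_j,T_j\>$) I use the first block of Lemma~\ref{propvar1}; when it involves $\mD_j^\bot$ (namely $\<H_j^\bot,H_j^\bot\>$, $\<h_j^\bot,h_j^\bot\>$, $\<T_j^\bot,T_j^\bot\>$) I use the same block again (these are exactly the six equations in \eqref{E-tildeh-gen}); and for $i\ne j$ the terms $\<H_i,H_i\>,\<h_i,h_i\>,\<T_i,T_i\>$ involve $\mD_i\subset\mD_j^\bot$ while $\<H_i^\bot,H_i^\bot\>,\<h_i^\bot,h_i^\bot\>,\<T_i^\bot,T_i^\bot\>$ involve $\mD_i^\bot\ni\mD_j$, so I invoke the dual block \eqref{E-h-gen}. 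Summing, the genuinely $(0,2)$-tensorial contributions collect into ${\cal Q}(\mD_j,g)$ and the divergence terms $\Div\<h_j,B_j\>$, $\Div((\tr_{\mD_j}B_j^\sharp)H_j)$, $\Div\<h_i^\bot,B_j\>$, $\Div((\tr_{\mD_i^\bot}B_j^\sharp)H_i^\bot)$ collect into $\Div X_j$ with $2X_j$ as stated. Care must be taken with signs: $\Upsilon_{h_j^\bot,h_j^\bot}$ enters with $+\tfrac12$ because $\dt\<h_j^\bot,h_j^\bot\>=-\<\tfrac12\Upsilon_{h_j^\bot,h_j^\bot},B_j\>$ but $\<h^\bot,h^\bot\>$ appears with a minus sign inside $Q$, and similarly for the other $\Upsilon$ and ${\cal T}$ terms.

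Next I would treat \eqref{E-dt-barQ}. By \eqref{E-div-barQ}--\eqref{E-barQ} the function $2\bar Q(\mD_i,g,\I)$ is a sum of (i) the three trace-pairing terms, (ii) $-\<\I^*,\I^\wedge\>_{|V(\mD_i)}$, (iii) the two terms $\<\tr_{\mD_i}(\I-\I^*)-\tr_{\mD_i^\bot}(\I-\I^*),H_i-H_i^\bot\>$, and (iv) the four-term contraction $\<\Theta,A_i^\bot-T_i^{\bot\sharp}+A_i-T_i^\sharp\>$. For a statistical connection $\Theta=0$, so (iv) vanishes identically and in particular its variation is zero; however the variations of the individual pieces $\<\Theta,A_i\>$ etc.\ are \emph{not} zero because $A_i,T_i^\sharp,A_i^\bot,T_i^{\bot\sharp}$ all depend on $g$ — this is exactly the subtlety flagged in the Remark before Lemma~\ref{propvar1}, and it is why Lemma~\ref{P-dT-3} records $\dt\<\Theta,A_j\>$, $\dt\<\Theta,T_j^\sharp\>$, $\dt\<\Theta,T_j^{\bot\sharp}\>$, $\dt\<\Theta,A_j^\bot\>$ separately. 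So I combine $\dt$ of each of the eight scalar terms in Lemma~\ref{P-dT-3} (and its eight dual counterparts in Lemma~\ref{P-dT-3}'s second block) with the right numerical coefficients coming from \eqref{E-barQ}: the $-1$ in front of $\<\I^*,\I^\wedge\>_{|V}$, the $+1$'s in front of the trace pairings, and the $+1$ multiplying the $\Theta$-contraction — keeping in mind the overall factor $\tfrac12$ relating $2\bar Q$ to $\bar Q$. Reading off the coefficient of $B_j(E_a,E_b)$ in the resulting sum produces exactly $\bar{\cal Q}(\mD_j,g,\I)(E_a,E_b)$: for instance the $-\sum B_j(E_a,E_b)\<\I_{E_a},\I_{E_b}\>_{|\mD_j^\bot}$ from $\dt\<\I^*,\I^\wedge\>_{|V(\mD_j)}$ becomes $+\<\I_{E_a},\I_{E_b}\>_{|\mD_j^\bot}$ after the $-1$ from \eqref{E-barQ}, and the factors $-2,+2,+6,+4$ in front of the $A_j,T_j^\sharp,T_j^\bot,h_j^\bot$ terms are precisely the coefficients appearing in Lemma~\ref{P-dT-3} (up to the sign flip $\I^*-\I=-2\I$ absorbed in those lemma statements for statistical connections). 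The dual half, with $i\ne j$, contributes the sum over $i\ne j$ in the definition of $\bar{\cal Q}$, and I must verify that no extra divergence term survives here — indeed none of the equations in Lemma~\ref{P-dT-3} has a divergence term, so $\dt\sum_i\bar Q$ is purely $(0,2)$-tensorial, consistent with the form of \eqref{E-dt-barQ}.

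The main obstacle I anticipate is purely bookkeeping rather than conceptual: matching every sign and every integer coefficient across the chain $Q\leadsto$ Lemma~\ref{propvar1} and $\bar Q\leadsto$ Lemma~\ref{P-dT-3}, correctly transcribing the dual ($i\ne j$) equations (where roles of $\mD$ and $\mD^\bot$, and of $h,T$ versus $h^\bot,T^\bot$, are swapped), and confirming that the only non-tensorial residue is the single divergence $\Div X_j$ in \eqref{E-dt-Q}. A secondary point requiring attention is that the formulas in Lemmas~\ref{propvar1} and \ref{P-dT-3} are quoted from \cite{rz-2,rz-3} for the case $k=2$, i.e.\ for the pair $(\mD_j,\mD_j^\bot)$; one must check that an \emph{adapted} $\mD_j$-variation on the $k$-product manifold restricts, from the viewpoint of the pair $(\mD_j,\mD_j^\bot)$, to exactly the kind of variation covered there — which holds because $B_j$ is supported on $\mD_j$ and the splitting $\mD_j^\bot=\bigoplus_{i\ne j}\mD_i$ is $g_t$-orthogonal for all $t$ — and that summing the $(\mD_i,\mD_i^\bot)$-identities over $i$ and using \eqref{E-Dk-Smix} is legitimate term-by-term. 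Once these consistency checks are in place, the proposition follows by straightforward summation.
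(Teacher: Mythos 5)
Your proposal is correct and follows essentially the same route as the paper, whose proof is exactly the term-by-term summation of Lemmas~\ref{propvar1} and \ref{P-dT-3} (together with their dual $i\ne j$ blocks) according to the decompositions \eqref{E-func-Q} and \eqref{E-barQ}, collecting the tensorial parts into ${\cal Q}(\mD_j,g)$ and $\bar{\cal Q}(\mD_j,g,\I)$ and the divergence residue into $\Div X_j$. One minor aside that does not affect your assembly (since you use Lemma~\ref{P-dT-3} as stated): because $\Theta=0$ at $t=0$ for a statistical connection, the nonvanishing of $\dt\<\Theta, A_j\>$ and its companions comes from $\dt\Theta\ne0$, i.e.\ from the $g$-dependence of $\I^*$ inside $\Theta$, rather than from the $g$-dependence of $A_j, T_j^\sharp$, etc., whose contribution $\<\Theta,\dt A_j\>$ vanishes at $t=0$.
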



The following theorem (based on Proposition~\ref{C-vr-terms}
and \eqref{E-DivThm-2} for divergence~terms)
with Euler-Lagrange equations for \eqref{Eq-Smix-g}
allows us to find the partial Ricci curvature
(see Corollary~\ref{P-Ric-D}).
Any solution of these Euler-Lagrange equations is called a mixed Einstein structure on an almost multi-product maniofold.

\begin{theorem}\label{T-main01}
A metric $g\in{\rm Riem}(M,\mD_1,\ldots,\mD_k)$ on $M$ with a statistical connection $\bar\nabla=\nabla+\I$ is critical for \eqref{Eq-Smix-g}
with respect to volume-preserving adapted variations if and only if
\begin{equation}\label{ElmixDDvp-b}
  {\cal Q}(\mD_j,g) +\bar{\cal Q}(\mD_j,g,\I) =-\big(\,\overline{\rm S}_{\,\mD_1,\ldots,\mD_k} -\frac12\,\Div\sum\nolimits_{\,i}(H_i + {H}_i^\bot) +\lambda_j\big)\,g_j
\end{equation}
for $1\le j\le k$ and some $\lambda_j\in\RR$.
In the case of $\,\I=0$, \eqref{ElmixDDvp-b} reads as
\begin{equation}\label{ElmixDDvp}
 {\cal Q}(\mD_j,g) =-\big(\,{\rm S}_{\,\mD_1,\ldots,\mD_k} -\frac12\,\Div\sum\nolimits_{\,i}(H_i +  {H}_i^\bot)
 + \lambda_j\big)\,g_j,\quad 1\le j\le k.
\end{equation}
\end{theorem}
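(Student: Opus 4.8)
The plan is to express the functional $\bar J^g_\mD$ in a form amenable to first-variation calculus and then invoke the variation formulas already assembled. The first step is to rewrite the integrand using the Proposition surrounding \eqref{E-Q1Q2-gen}: since $2\,\overline{\rm S}_{\,\mD_1,\ldots,\mD_k} = \sum_i \overline{\rm S}_{\,\mD_i,\mD_i^\bot}$ by \eqref{E-Dk-Smix}, and by \eqref{E-Q1Q2-gen} the quantity $2\,\overline{\rm S}_{\,\mD_1,\ldots,\mD_k}$ equals $\sum_i(Q(\mD_i,g)+\bar Q(\mD_i,g,\I))$ plus a total divergence $\Div\,\tilde\xi$ with $\tilde\xi=\sum_i\big(\tfrac12(P_i(\tr_{\,\mD_i^\bot}(\I-\I^*))+P_i^\bot(\tr_{\,\mD_i}(\I-\I^*)))+H_i+H_i^\bot\big)$. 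Hence, up to a factor and up to the boundary term, $\bar J^g_\mD$ differs from $\int_M \sum_i(Q(\mD_i,g)+\bar Q(\mD_i,g,\I))\,{\rm d}\vol_g$ by $\int_M\Div\tilde\xi\,{\rm d}\vol_g$, whose $t$-derivative vanishes along any compactly supported variation by \eqref{E-DivThm-2} (applied with $X=\tilde\xi$, whose variation is supported in $\Omega$). So it suffices to differentiate $\int_M \sum_i(Q(\mD_i,g)+\bar Q(\mD_i,g,\I))\,{\rm d}\vol_g$.

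Next I would fix an index $j$ and restrict to $\mD_j$-variations, writing $B=B_j=\dot g$ supported in $\mD_j$. Differentiating the product $\sum_i(Q+\bar Q)\,{\rm d}\vol_g$ and using \eqref{E-dotvolg} for $\partial_t({\rm d}\vol_g)=\tfrac12\<B_j,g_j\>\,{\rm d}\vol_g$, together with Proposition~\ref{C-vr-terms}'s formulas \eqref{E-dt-Q} and \eqref{E-dt-barQ}, gives
\[
 \partial_t\!\int_M\! \sum_i(Q+\bar Q)\,{\rm d}\vol_g
 = \int_M\!\Big\langle {\cal Q}(\mD_j,g)+\bar{\cal Q}(\mD_j,g,\I)
 +\tfrac12\big(\textstyle\sum_i(Q(\mD_i,g)+\bar Q(\mD_i,g,\I))\big)g_j,\ B_j\Big\rangle\,{\rm d}\vol_g,
\]
since the $\Div X_j$ term in \eqref{E-dt-Q} integrates to zero by the Divergence Theorem. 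Now substitute $\sum_i(Q(\mD_i,g)+\bar Q(\mD_i,g,\I))=2\,\overline{\rm S}_{\,\mD_1,\ldots,\mD_k}-\Div\tilde\xi=2\,\overline{\rm S}_{\,\mD_1,\ldots,\mD_k}-\Div\sum_i(H_i+H_i^\bot)$ (the contorsion part of $\Div\tilde\xi$ contributes nothing after using that $\I$ is fixed and $g_j$-traceless manipulations—more precisely one keeps the divergence of $\sum_i(H_i+H_i^\bot)$ as the geometric piece, as in the theorem's statement). This yields the $L^2$-gradient
\[
 {\cal Q}(\mD_j,g)+\bar{\cal Q}(\mD_j,g,\I)
 +\big(\,\overline{\rm S}_{\,\mD_1,\ldots,\mD_k}-\tfrac12\,\Div\textstyle\sum_i(H_i+H_i^\bot)\big)\,g_j .
\]

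The volume-preserving constraint ${\rm Vol}(\Omega,g_t)={\rm Vol}(\Omega,g)$ means, by \eqref{E-dotvolg}, that admissible $B_j$ satisfy $\int_\Omega\<B_j,g_j\>\,{\rm d}\vol_g=0$; by the Lagrange multiplier principle the Euler--Lagrange equation is that the above gradient is pointwise a multiple $-\lambda_j$ of $g_j$ on each $\mD_j$, giving exactly \eqref{ElmixDDvp-b} (with a separate multiplier $\lambda_j$ for each $j$ because the variations decompose as $B=\sum_j B_j$ and each $\mD_j$-block varies independently). The case $\I=0$ is immediate since then $\bar Q\equiv0$, $\bar{\cal Q}\equiv0$ and $\overline{\rm S}={\rm S}$, yielding \eqref{ElmixDDvp}. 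The main obstacle is bookkeeping: one must verify that all total-divergence terms produced along the way (the $\Div X_j$ in \eqref{E-dt-Q}, the divergences hidden inside the individual lines of Lemma~\ref{propvar1}, and the variation of $\Div\tilde\xi$) genuinely integrate to zero under the compact-support hypothesis via \eqref{E-DivThm-2}, and that the term $-\tfrac12\Div\sum_i(H_i+H_i^\bot)\,g_j$ is correctly separated from the remaining $\bar{\cal Q}$-data rather than being absorbed into ${\cal Q}$; this is where the factor $\tfrac12$ from $\partial_t({\rm d}\vol_g)$ and the coefficient $2$ in \eqref{E-Dk-Smix} must be tracked with care.
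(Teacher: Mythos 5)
Your proposal is correct and follows essentially the same route as the paper: both reduce the variation of $\bar J^g_\mD$ via \eqref{E-Q1Q2-gen} to that of $\int\sum_i(Q(\mD_i,g)+\bar Q(\mD_i,g,\I))\,{\rm d}\vol_g$, use \eqref{E-dotvolg}, \eqref{E-DivThm-2} and Proposition~\ref{C-vr-terms} to identify the $L^2$-gradient, and then invoke the volume constraint to introduce the multipliers $\lambda_j$ (the paper merely splits the argument into the $\I=0$ case via \eqref{E-PW3-k} and the statistical case via \eqref{eqvarstat}, whereas you treat the general case and specialize). Only note that the clean reason the contorsion part of $\tilde\xi$ drops out of the coefficient of $g_j$ is that $\I=\I^*$ for the statistical connection at $t=0$ (Corollary~\ref{C-stat}, \eqref{eqvarstat}), rather than the ``$g_j$-traceless manipulations'' you allude to.
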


\begin{proof} Let a $\mD_j$-variation of $g_t$ (for some $j\le k$) be compactly supported in $\Omega\subset M$.

1. First, we will prove \eqref{ElmixDDvp}.
Let $g_t$ be a $\mD_j$-variation of $g$ compactly supported in $\Omega\subset M$.
Applying Divergence theorem to \eqref{E-dt-Q} and removing integrals of divergences of vector fields supported in $\Omega\subset M$, we get
\begin{equation}\label{E-Sc-var1a}
 \int_{\Omega} \sum\nolimits_{\,i} \dt Q(\mD_i,g_t)_{\,|\,t=0}\,{\rm d}\vol_g = \int_{\Omega} \<{\cal Q}(\mD_j,g),\ B_j\>\,{\rm d}\vol_g .
\end{equation}
By \eqref{E-DivThm-2} with $X=\sum_{\,i}(H_i+H_i^\bot)$ we get
\[
 {\rm\frac{d}{dt}}\int_{\Omega}\Div\sum\nolimits_{\,i}(H_i+H_i^\bot)\,{\rm d}\vol_g =0.
\]
Thus, for $J^g_\mD: g\mapsto\int_{M} {\rm S}_{\,\mD_1,\ldots,\mD_k}\,{\rm d}\vol_g$, i.e., $\bar J^g_\mD$ when $\I=0$, see \eqref{Eq-Smix-g},
using \eqref{E-Sc-var1a} and \eqref{E-dotvolg}, we~get
\begin{eqnarray}
\nonumber
 2\,{\rm\frac{d}{dt}}\,J^g_{\mD}(g_t)_{\,|\,t=0} \eq {\rm\frac{d}{dt}}\int_{\Omega} \sum\nolimits_{\,i} Q(\mD_i,g_t)\,{\rm d}\vol_{g_t \,|\,t=0} \\
\nonumber
 \eq\int_{\Omega} \sum\nolimits_{\,i} \dt Q(\mD_i,g_t)_{|\,t=0}\,{\rm d}\vol_g + \int_{\Omega} \sum\nolimits_{\,i} Q(\mD_i,g)\,\dt({\rm d}\vol_{g_{t}})_{|\,t=0} \\
\label{E-varJh-init2}
 \eq \int_{\Omega}\<\,{\cal Q}(\mD_j,g) +\frac12\sum\nolimits_{\,i} Q(\mD_i,g)\,g,\ B_j\,\>\,{\rm d}\vol_g .
\end{eqnarray}
If $g$ is critical for $J^g_{\mD}$ with respect to $\mD_j$-variations of $g$,
then the integral in \eqref{E-varJh-init2} is zero for any symmetric $(0,2)$-tensor $B_j$.
This yields the $\mD_j$-component of Euler-Lagrange equation
\begin{eqnarray}\label{ElmixDD}
  {\cal Q}(\mD_j,g) +\frac12\sum\nolimits_{\,i} Q(\mD_i,g)\,g_j = 0,\quad 1\le j\le k.
\end{eqnarray}
The Euler-Lagrange equation of \eqref{Eq-Smix-g} with respect to volume-preserving $\mD_j$-variations are given by
${\cal Q}(\mD_j,g) +(\frac12\sum\nolimits_{\,i} Q(\mD_i,g)+\lambda_j)\,g_j = 0$ instead of \eqref{ElmixDD}, i.e., \eqref{ElmixDDvp}.

2. Next, we will prove \eqref{ElmixDDvp-b}.
 For $\bar J^g_\mD$, see \eqref{Eq-Smix-g}, and $\bar Q(\mD_i,g,\I)$ given in \eqref{E-barQ}, we~get
\begin{equation*}
 2\,{\rm\frac{d}{dt}}\,\bar J^g_{\mD}(g_t,\I)_{\,|\,t=0} = {\rm\frac{d}{dt}}\int_{\Omega}
 \sum\nolimits_{\,i}\big(\bar Q(\mD_i,g_t,\I) + Q(\mD_i,g_t)\big)\,{\rm d}\vol_{g_t \,|\,t=0} .
\end{equation*}
Since
\[
 \dt\int_{\Omega} \bar Q(\mD_i,g_t,\I)\,{\rm d}\vol_g
 =\int_{\Omega}\dt \bar Q(\mD_i,g_t,\I)\,{\rm d}\vol_g +\int_{\Omega} \bar Q(\mD_i,g_t,\I)\,\dt({\rm d}\vol_{g_{t}}),
\]
by \eqref{eqvarstat}, \eqref{E-dotvolg} and  \eqref{E-dt-barQ},
 in the case of a statistical connection $\bar\nabla$,
we get
\begin{eqnarray*}
 && 2\,{\rm\frac{d}{dt}}\,\bar J^g_{\mD}(g_t,\I)_{|\,t=0} = \int_{\Omega} \<{\cal Q}(\mD_j,g) +\bar{\cal Q}(\mD_j,g,\I) \\
 && +\,\big(\,\overline{\rm S}_{\,\mD_1,\ldots,\mD_k} - \frac12\,\Div\sum\nolimits_{\,i}(H_i + {H}_i^\bot)\big)\,g_j,\ B_j\big\>\,{\rm d}\vol_g.
\end{eqnarray*}
If $g$ is critical for $\bar J^g_{\mD}$ with respect to $\mD_j$-variations of $g$,
then the above integral is zero for any symmetric $(0,2)$-tensor $B_j$.
This yields the $\mD_j$-component of the Euler-Lagrange equation
\begin{equation}\label{ElmixDD-b}
 {\cal Q}(\mD_j,g) +\bar{\cal Q}(\mD_j,g,\I) + \big(\,\overline{\rm S}_{\,\mD_1,\ldots,\mD_k} - \frac12\,\Div\sum\nolimits_{\,i}(H_i + {H}_i^\bot)\big)\,g_j =0.
\end{equation}
The Euler-Lagrange equation of \eqref{Eq-Smix-g} with respect to volume-preserving $\mD_j$-variations
are \eqref{ElmixDDvp-b} instead of \eqref{ElmixDD-b}.
\end{proof}

\begin{remark}\rm
Theorem~\ref{T-main01} generalizes results in  \cite{rz-2,rz-3} where $k=2$.
Note that \eqref{ElmixDDvp} reads as
\begin{eqnarray}\label{ElmixDDvp-expanded}
 \nonumber
 &&\sum\nolimits_{\,i\ne j}
 \big(\Div{h_i^\bot}+({\cal K}_i^\bot)^\flat +{H}_i\otimes{H}_i-\frac12\Upsilon_{h_i,h_i}-\frac12\Upsilon_{T_i,T_i}-2\,({\cal T}_i^\bot)^\flat\big) \\
\nonumber
 && +\,\Div{h_j} +{\cal K}_j^\flat -\frac12\Upsilon_{h_j^\bot,h_j^\bot}
 +{H}_j^\bot\otimes{H}_j^\bot -\frac12\Upsilon_{T_j^\bot,T_j^\bot} -2\,{\cal T}_j^\flat \\
 && =\big(\,{\rm S}_{\,\mD_1,\ldots,\mD_k} -\Div(H_j + \sum\nolimits_{\,i\ne j} {H}_i^\bot) + \lambda_j\big)\,g_j,\quad 1\le j\le k
\end{eqnarray}
for some $\lambda_j\in\RR$.
One can rewrite \eqref{ElmixDDvp-expanded} using the partial Ricci tensor \eqref{E-Rictop2} and replacing $\Div{h_j}$ and ${\Div}\,h_i^\bot$ for $i\ne j$
in \eqref{ElmixDDvp} due to \eqref{E-genRicN}.
\end{remark}

\begin{example}\rm
A pair $(\mD_i,\mD_j)$ with $i\ne j$ of distributions on a Riemannian almost multi-product manifold
$(M,g;\mD_1,\ldots,\mD_k)$ will be called
\textit{mixed integrable}, if $T_{\,i,j}(X,Y)=0$ for all $X\in\mD_i$ and $Y\in\mD_j$, see \cite{r-IF-k}.
 Let $(M,g;\mD_1,\ldots,\mD_k)$ with $k>2$ has integrable distributions
$\mD_1,\ldots,\mD_k$ and each pair $(\mD_i,\mD_j)$ is mixed integrable.
Then $T_{l}^\bot(X,Y)=0$ for all $l\le k$ and $X\in\mD_{\,i},\,Y\in\mD_{j}$ with $i\ne j$, see \cite[Lemma~2]{r-IF-k}.
In this case, \eqref{ElmixDDvp-expanded} has a shorter view
\begin{eqnarray*}
 &&\sum\nolimits_{\,i\ne j}\big(\Div{h_i^\bot}+({\cal K}_i^\bot)^\flat +{H}_i\otimes{H}_i-\frac12\Upsilon_{h_i,h_i} \big)
 +\Div{h_j} +{\cal K}_j^\flat -\frac12\Upsilon_{h_j^\bot,h_j^\bot} \\
 && +\,{H}_j^\bot\otimes{H}_j^\bot
 =\big({\rm S}_{\,\mD_1,\ldots,\mD_k} -\Div(H_j +\sum\nolimits_{\,i\ne j} {H}_i^\bot) +\lambda_j\big)\,g_j , \quad j=1,\ldots,k.
\end{eqnarray*}
\end{example}

\begin{remark}\rm
According to Lemma~\ref{L-barS=S}, any adapted statistical connection is critical for the action \eqref{Eq-Smix-g}
with respect to variations among adapted statistical connections.
The Euler-Lagrange equations for \eqref{Eq-Smix-g},
considered as a functional of a general contorsion tensor $\I$,
can be obtained using \cite[Theorem~2]{rz-3},
however, unlike Cartan spin-connection equation, these equations also heavily involve the extrinsic Riemannian geometry of the distributions.
\end{remark}

\begin{definition}
\rm
Define the
symmetric $(0,2)$-tensor $\overline\Ric_{\,\mD}$
by its restrictions
on $k$ subbundles $\mD_j$ of $TM$,
\begin{eqnarray}\label{E-main-0ij-kk}
 \overline\Ric_{\,\mD\,|\,\mD_j\times\mD_j} \eq \Ric_{\,\mD\,|\,\mD_j\times\mD_j} - \bar{\cal Q}(\mD_j,g,\I),
 \quad 1\le j\le k ,
\end{eqnarray}
where the symmetric $(0,2)$-tensor $\Ric_{\,\mD}$ is defined by
\begin{equation}\label{E-main-0ij-k}
 \Ric_{\,\mD\,|\,\mD_j\times\mD_j} = -{\cal Q}(\mD_j,g) +\mu_j \,g_j , \quad j=1,\ldots,k ,
\end{equation}
and $\mu_j$ are given in \eqref{E-mu-k} below.
\end{definition}

Using Theorem~\ref{T-main01}, we present the ``mixed Ricci" tensor $\overline\Ric_{\,\mD}$ explicitly for statistical connections.
The~following proposition generalizes results in \cite{r2018,rz-2} when~$k=2$.

\begin{proposition}\label{P-Ric-D}
A metric $g\in{\rm Riem}(M,\mD_1,\ldots\mD_k)$ is critical for the
action \eqref{Eq-Smix-g} with a statistical connection $\bar\nabla=\nabla+\I$ with respect to adapted variations if and only if $g$ and $\I$ satisfy \eqref{E-gravity} and the Ricci type
tensor $\overline\Ric_{\,\mD}$ is given by \eqref{E-main-0ij-kk} and~\eqref{E-main-0ij-k}.
\end{proposition}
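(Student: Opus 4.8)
The plan is to deduce Proposition~\ref{P-Ric-D} directly from Theorem~\ref{T-main01} by re-packaging the Euler--Lagrange equations \eqref{ElmixDDvp-b} into the tensorial form \eqref{E-gravity}. First I would recall that, by \eqref{E-Dk-Smix} and $\T_{\,\mD_1,\ldots,\mD_k}=\tr_g r$, the quantity $\overline{\mathcal S}_{\,\mD}:=\tr_g\overline\Ric_{\,\mD}$ is obtained by summing the traces over the $\mD_j$ of the right-hand sides of \eqref{E-main-0ij-kk}, so the scalar $\overline{\mathcal S}_{\,\mD}$ is determined once the constants $\mu_j$ are fixed. The choice of $\mu_j$ in \eqref{E-mu-k} is precisely the one that makes $\Ric_{\,\mD\,|\,\mD_j\times\mD_j}$ have the correct trace, namely $\tr_{g}(\Ric_{\,\mD\,|\,\mD_j\times\mD_j})=\T_{\,\mD_1,\ldots,\mD_k}$ for each $j$ (equivalently, so that the ``averaged'' partial Ricci tensor reproduces the mixed scalar curvature); I would spell this out by tracing \eqref{E-main-0ij-k} over $\mD_j$ and using $\tr_{g_j}g_j=n_j$ together with the trace identities for the six terms of $Q(\mD_j,g)$ listed after \eqref{E-genRicN}.

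Next I would substitute the definitions \eqref{E-main-0ij-kk} and \eqref{E-main-0ij-k} into the critical-point equation \eqref{ElmixDDvp-b}. Writing \eqref{ElmixDDvp-b} as
\[
 \mathcal Q(\mD_j,g)+\bar{\mathcal Q}(\mD_j,g,\I)
 =-\Big(\overline{\rm S}_{\,\mD_1,\ldots,\mD_k}-\tfrac12\,\Div\textstyle\sum_i(H_i+H_i^\bot)+\lambda_j\Big)g_j,
\]
and using $\mathcal Q(\mD_j,g)=-\Ric_{\,\mD\,|\,\mD_j\times\mD_j}+\mu_j g_j$ and $\bar{\mathcal Q}(\mD_j,g,\I)=\Ric_{\,\mD\,|\,\mD_j\times\mD_j}-\overline\Ric_{\,\mD\,|\,\mD_j\times\mD_j}$, the left side collapses to $\mu_j g_j-\overline\Ric_{\,\mD\,|\,\mD_j\times\mD_j}$. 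Hence on each $\mD_j$ criticality is equivalent to
\[
 \overline\Ric_{\,\mD\,|\,\mD_j\times\mD_j}
 =\Big(\overline{\rm S}_{\,\mD_1,\ldots,\mD_k}-\tfrac12\,\Div\textstyle\sum_i(H_i+H_i^\bot)+\lambda_j+\mu_j\Big)g_j .
\]
I would then identify the coefficient of $g_j$ on the right with $\tfrac12\,\overline{\mathcal S}_{\,\mD}-\Lambda+\mathfrak a\,\Xi$-type terms: absorbing the divergence term and the trace normalization into $\overline{\mathcal S}_{\,\mD}$ via the $\mu_j$, and collecting the free constants $\lambda_j$ into the cosmological constant $\Lambda$ (the freedom in $\lambda_j$ being exactly the Lagrange multipliers from the volume constraint), one recovers \eqref{E-gravity} with $\Xi$ read off as the remaining (matter/contorsion) part. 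Conversely, reversing these algebraic steps shows that any $(g,\I)$ satisfying \eqref{E-gravity} with $\overline\Ric_{\,\mD}$ as in \eqref{E-main-0ij-kk}--\eqref{E-main-0ij-k} is critical.

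The main obstacle I anticipate is purely bookkeeping rather than conceptual: one must check that the divergence term $\tfrac12\Div\sum_i(H_i+H_i^\bot)$ and the block-wise structure (the equation only constrains $\overline\Ric_{\,\mD}$ on each $\mD_j\times\mD_j$, the off-diagonal $\mD_i\times\mD_j$ blocks being unconstrained by adapted variations) are compatible with presenting the result as a single tensorial Einstein-type equation \eqref{E-gravity}; this forces $\overline\Ric_{\,\mD}$ to be defined only through its diagonal restrictions, exactly as in the Definition preceding the proposition, and requires care that the scalar $\overline{\mathcal S}_{\,\mD}$ appearing in \eqref{E-gravity} is the genuine $g$-trace of $\overline\Ric_{\,\mD}$. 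I would close by verifying this trace consistency using \eqref{E-Dk-Smix} and \eqref{E-PW0}, which shows $\sum_j\tr_{g_j}\mathcal Q(\mD_j,g)$ equals $\sum_i Q(\mD_i,g)$ up to divergence terms, so that summing the diagonal equations over $j$ reproduces the scalar form of \eqref{E-gravity} and pins down $\mu_j$ and the relation between $\{\lambda_j\}$ and $\Lambda$; the statement then follows from Theorem~\ref{T-main01}.
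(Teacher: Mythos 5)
Your skeleton is the same as the paper's (start from Theorem~\ref{T-main01}, substitute the definitions \eqref{E-main-0ij-kk}--\eqref{E-main-0ij-k} into \eqref{ElmixDDvp-b}, and the block-diagonal collapse $\mathcal Q(\mD_j,g)+\bar{\mathcal Q}(\mD_j,g,\I)=\mu_j g_j-\overline\Ric_{\,\mD\,|\,\mD_j\times\mD_j}$ is correct), but you skip the one step that carries the actual content: showing that the constants $\mu_j$ can be chosen -- and are uniquely determined -- so that the $k$ block equations are equivalent to the single Einstein-type equation \eqref{E-gravity}. In the paper this is done by imposing $\Ric_{\,\mD}-\frac12\,{\cal S}_{\,\mD}\,g=0$ on each $\mD_j$-block given the Euler--Lagrange equations ${\cal Q}(\mD_j,g)=-b_j\,g_j$, which produces the linear system \eqref{E-mu-system} in $(\mu_1,\ldots,\mu_k)$; its matrix has determinant $2^{k-1}(2-n)\neq0$ for $n>2$, so the $\mu_j$ are uniquely given by \eqref{E-mu-k} (the case $n=2$, $k=2$ being treated separately with $\mu_1=\mu_2=0$). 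You replace this by the assertion that $\mu_j$ is fixed by the per-block trace normalization $\tr_g\big(\Ric_{\,\mD\,|\,\mD_j\times\mD_j}\big)={\rm S}_{\,\mD_1,\ldots,\mD_k}$ for every $j$; that is not the paper's characterization and cannot be right in general -- under \eqref{E-gravity} with $\Lambda=0=\Xi$ the $\mD_j$-trace of $\Ric_{\,\mD}$ is $\frac{n_j}{2}\,{\cal S}_{\,\mD}$, which cannot equal the same scalar for all $j$ unless all $n_j$ coincide -- and you never verify that it reproduces \eqref{E-mu-k}.

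The second weak point is the passage "absorbing the divergence term and the trace normalization into $\overline{\cal S}_{\,\mD}$ via the $\mu_j$, collecting the $\lambda_j$ into $\Lambda$, and reading off $\Xi$ as the remaining part." The self-consistency you need is precisely that the $j$-dependent scalar coefficient $\overline{\rm S}_{\,\mD_1,\ldots,\mD_k}-\frac12\Div\sum_i(H_i+H_i^\bot)+\lambda_j+\mu_j$ equals $\frac12\,\overline{\cal S}_{\,\mD}-\Lambda$, where $\overline{\cal S}_{\,\mD}$ is the trace of the very tensor being defined; this is a nontrivial constraint on the $\mu_j$ (the same linear system as above), not a free absorption, and declaring $\Xi$ to be "whatever remains" makes the equivalence with \eqref{E-gravity} vacuous rather than a statement about the specific tensor \eqref{E-main-0ij-kk}--\eqref{E-main-0ij-k}. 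So the proposal needs the solvability argument for \eqref{E-mu-system} (including the degenerate case $n=2$) inserted where you currently hand-wave; with that added, it becomes essentially the paper's proof.
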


\begin{proof} The Euler-Lagrange equations \eqref{ElmixDDvp} consist of $\mD_j\times\mD_j$-components.
Thus, for \eqref{Eq-Smix-g} we obtain \eqref{E-main-0ij-k}.
If $n=2$ (and $k=2$), then we take $\mu_1=\mu_2=0$, see \cite{r2018}.
Assume that $n>2$. Substituting \eqref{E-main-0ij-k} with arbitrary $(\mu_j)$ into \eqref{E-gravity} along $\mD_j$,
we conclude that if the Euler Lagrange equations ${\cal Q}(\mD_j,g)=-b_j\,g_j\ (1\le j\le k)$ hold, where
$b_j\,g_j$ is the RHS of \eqref{ElmixDDvp}, then
 $\Ric_{\,\mD }-(1/2)\,{\cal S}_{\,\mD}\cdot g = 0$,
see \eqref{E-gravity} with $\Lambda=0=\Xi$, if and only if $(\mu_j)$ satisfy the linear~system
\begin{equation}\label{E-mu-system}
  \sum\nolimits_{\,i} n_i\,\mu_i -2\,\mu_j = a_j,\quad j=1,\ldots,k,
\end{equation}
with coefficients $a_j= \tr_g(\sum_{\,i} {\cal Q}(\mD_i,g)) -2\,{\cal Q}(\mD_j,g)$.
The matrix of \eqref{E-mu-system} is
\[
 A=\left(\begin{array}{ccccc}
     n_1-2 & n_2 & \ldots& n_{k-1} & n_k \\
     n_1 & n_2-2 & \ldots& n_{k-1} & n_k \\
     \ldots & \ldots & \ldots& \ldots & \ldots \\
     n_1 & n_2 & \ldots & n_{k-1} & n_k-2 \\
   \end{array}\right) .
\]
Its determinant $\det A=2^{k-1}(2-n)$ is negative when $n>2$. Hence, the system \eqref{E-mu-system} has a unique solution
$(\mu_1,\ldots,\mu_k)$
given by
\begin{equation}\label{E-mu-k}
 \mu_{\,i}=-\frac1{2n-4}\,\big(\sum\nolimits_{\,j}\,(a_{\,i}-a_{j})\,n_{j}-2\,a_{\,i}\big),
\end{equation}
and $\Ric_{\,\mD\,|\,\mD_j\times\mD_j}$ satisfies \eqref{E-main-0ij-k}.
From \eqref{E-main-0ij-k} and \eqref{ElmixDDvp-b} the system \eqref{E-main-0ij-kk} follows.
\end{proof}

\subsection{Semi-symmetric connections}
\label{sec:contorsion_semi_symmetric}

Semi-symmetric connections, that are a special case of a metric connection parameterized by a vector field,
were introduced in~\cite{Yano} and then used by many authors, e.g.,
in \cite{Dimitru,GDH-B,wang,wang2} to examine special multiply Einstein twisted (or warped) products.
 Here, we restrict variations of the mixed scalar curvature  on $(M,g;\mD_1,\ldots\mD_k)$ to semi-symmetric connections
and obtain meaningful Euler-Lagrange equations,
which allow us to present the mixed Ricci tensor $\overline\Ric_{\,\mD}$ explicitly.
Using variations of $\I$ in this class of connections, we also get an analogue of Cartan spin connection equation.

\begin{definition}\rm
A linear connection $\bar\nabla$ on $(M,g)$ is said to be \textit{semi-symmetric} if
\begin{equation}\label{Uconnection}
 \bar\nabla_XY=\nabla_XY + \<U , Y\> X -\<X,Y\>U,\quad X,Y\in\mathfrak{X}_M,
\end{equation}
where $U$
is a given vector field on $M$. In this case, the contorsion tensor is $\I_XY=\<U , Y\> X -\<X,Y\>U$.
\end{definition}

We consider variations of a semi-symmetric connection
only among connections also satisfying \eqref{Uconnection} for some vector fields $U_t$.
The following theorem (concerning a simple version of Einstein-Cartan gravity) generalizes \cite[Theorem~6]{rz-3} for $k=2$.

\begin{theorem}\label{propUconnectionEL}
A pair $(g, \I)$, where $g\in{\rm Riem}(M;\mD_1,\ldots\mD_k)$ and $\I$ corresponds to a semi-symmet\-ric connection on $M$,
is critical for \eqref{Eq-Smix-g} with respect to volume-preserving adapted variations of metric and variations of $\,\I$ corresponding to semi-symmetric connections if and only if the following Euler-Lagrange equations are satisfied:
\begin{eqnarray}\label{UELD}
 \nonumber
 && {\cal Q}(\mD_j,g) -\frac12\,{n_j(n_j^\bot-1)}\,P_j^\bot(U)^\flat\otimes P_j^\bot(U)^\flat +\frac14\,(n_j-n_j^\bot)(\Div P_j^\bot(U))\,g_j \\
\nonumber
 && -\sum\nolimits_{\,i\ne j}\big(\,\frac12\,n_i^\bot(n_i-1)\,P_i(U)^\flat\otimes P_i(U)^\flat -\frac14\,(n_i^\bot-n_i)\Div P_i(U)\,g_j \,\big) \\
 && +\,\big(\,{\rm S}_{\,\mD_1,\ldots,\mD_k} -\frac12\,\Div(H_j +\sum\nolimits_{\,i\ne j} {H}_i^\bot) +\lambda_j\big) g_j = 0,\quad 1\le j\le k
\end{eqnarray}
for some $\lambda_j\in\RR$, and
\begin{equation}\label{UcriticalforI}
 2\,n_j(n_j^\bot-1)\,P_j^\bot(U) = (n_j^\bot-n_j) H_j^\bot ,\quad
 2\,n_j^\bot(n_j-1)\,P_j(U) = (n_j-n_j^\bot) H_j .
\end{equation}
\end{theorem}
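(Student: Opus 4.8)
The plan is to split the proof into two independent pieces, corresponding to the two families of variations: adapted volume-preserving variations of the metric $g$ (with $\I$ fixed as a semi-symmetric contorsion), and variations of the vector field $U$ parametrizing the connection (with $g$ fixed). For the first piece, I would start from Theorem~\ref{T-main01}, whose Euler-Lagrange equation \eqref{ElmixDDvp-b} already encodes criticality with respect to adapted volume-preserving metric variations for an \emph{arbitrary} statistical connection; here, however, $\bar\nabla$ is metric (semi-symmetric), not statistical, so I cannot invoke it directly. Instead I would redo the computation of $\dt\sum_i\bar Q(\mD_i,g_t,\I)$ for a $\mD_j$-variation using Lemma~\ref{P-dT-3} as a template, but now substituting the explicit semi-symmetric contorsion $\I_XY=\<U,Y\>X-\<X,Y\>U$. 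The key simplification is that for this $\I$ one computes $\tr_{\mD_j}\I$, $\tr_{\mD_j^\bot}\I$, $\I^*$, $\I^\wedge$ etc.\ in closed form: e.g.\ $\tr_{\mD_j}\I = n_j U - P_j(U)$-type expressions, and the contractions appearing in $\bar Q$ collapse to terms built from $P_j(U)$, $P_j^\bot(U)$, their divergences, and the mean curvatures $H_j, H_j^\bot$. Feeding these into $\bar{\cal Q}(\mD_j,g,\I)$ from Proposition~\ref{C-vr-terms} (suitably adapted to the metric case) should produce exactly the quadratic term $-\frac12 n_j(n_j^\bot-1)P_j^\bot(U)^\flat\otimes P_j^\bot(U)^\flat$, the divergence term $\frac14(n_j-n_j^\bot)(\Div P_j^\bot U)g_j$, and the dual $i\ne j$ contributions, plus the ${\cal Q}(\mD_j,g)$ and curvature/divergence terms already present in \eqref{ElmixDDvp-b}. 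Combined with the volume-preserving Lagrange multiplier $\lambda_j$ this yields \eqref{UELD}.

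For the second piece, I would fix $g$ and vary $U\mapsto U_t$ with $\dot U=:W$ a compactly supported vector field, and differentiate $\bar J^g_\mD$ using the representation $\overline{\rm S}_{\mD_1,\dots,\mD_k}={\rm S}_{\mD_1,\dots,\mD_k}+\frac12\sum_i(\bar{\rm S}_{\mD_i,\mD_i^\bot}-{\rm S}_{\mD_i,\mD_i^\bot})$, where by \eqref{E-div-barQ} the difference terms equal $-\bar Q(\mD_i,g,\I)$ modulo a divergence that integrates to zero by the Divergence Theorem. Hence $\frac{d}{dt}\bar J^g_\mD = -\sum_i\int_M \dt\bar Q(\mD_i,g,\I_t)\,\dvol_g$, and since ${\rm S}_{\mD_1,\dots,\mD_k}$ and the extrinsic tensors $h_i,H_i,T_i$ do not depend on $U$, this reduces to varying the explicit quadratic-in-$U$ and linear-in-$U$ expressions inside $\bar Q(\mD_i,g,\I)$ from \eqref{E-barQ}. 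Collecting the coefficient of $W$ and setting it to zero for all compactly supported $W$ gives a pointwise identity; decomposing it into $\mD_j$ and $\mD_j^\bot$ components yields the two algebraic relations \eqref{UcriticalforI} relating $P_j^\bot(U)$ to $H_j^\bot$ and $P_j(U)$ to $H_j$ with the stated integer coefficients $n_j(n_j^\bot-1)$ and $n_j^\bot(n_j-1)$.

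The main obstacle I anticipate is bookkeeping, not conceptual: correctly carrying out the contractions of the semi-symmetric $\I$ against the adapted orthonormal frame, keeping track of which indices run over $\mD_j$ versus $\mD_j^\bot$, and making sure the numerous dual ($i\ne j$) terms in Proposition~\ref{C-vr-terms} are assembled with the right signs and dimension factors $n_i,n_i^\bot$. A secondary subtlety is the interplay between the two variation types: since we vary $g$ and $U$ \emph{independently}, criticality is equivalent to both \eqref{UELD} and \eqref{UcriticalforI} holding, and one must verify that restricting $g_t$ to adapted metrics and $U_t$ to vector fields (so $\bar\nabla^t$ stays semi-symmetric) does not lose any Euler-Lagrange content — this is handled by noting that the admissible variations $B_j$ (symmetric $(0,2)$-tensors supported in $\mD_j$) and $W$ (arbitrary vector fields) are generic enough in their respective spaces. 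A final routine check is that all divergence terms produced along the way (e.g.\ $\Div X_j$ from \eqref{E-dt-Q} and the $\dt X$-type terms from \eqref{E-DivThm-2}) integrate to zero because the relevant vector fields are compactly supported in $\Omega$, so they do not contribute to the Euler-Lagrange equations; this is exactly the mechanism already used in the proof of Theorem~\ref{T-main01} and transfers verbatim.
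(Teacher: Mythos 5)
Your proposal follows essentially the same route as the paper's proof: treat the adapted volume\--preserving metric variations and the variations of $U$ independently; for the connection part, reduce $\bar Q(\mD_i,g,\I)$ via \eqref{E-barQ} to a closed-form expression in $U$, $H_i$, $H_i^\bot$ and collect the coefficient of $\dot U$ to get \eqref{UcriticalforI}; for the metric part, compute $\dt\bar Q(\mD_i,g_t,U)$ for the semi-symmetric contorsion up to divergences and add these terms to the $\I=0$ Euler--Lagrange equation to get \eqref{UELD}. The paper does exactly this, importing the two derivative formulas from \cite{rz-3} (Lemma 6(a),(b) and Proposition 10, for $k=2$) and summing/dualizing over $i$, rather than re-deriving them as you propose. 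Three details to adjust in your write-up: (i) the base equation for assembling \eqref{UELD} is \eqref{ElmixDDvp} (the $\I=0$ case, featuring ${\rm S}_{\,\mD_1,\ldots,\mD_k}$), not \eqref{ElmixDDvp-b}; the latter was derived for statistical $\I$ and already contains $\bar{\cal Q}$ and $\overline{\rm S}$, so adding your recomputed semi-symmetric terms to it would double-count the contorsion --- your earlier remark that Theorem~\ref{T-main01} cannot be invoked directly is the correct instinct, but then the final assembly must rest on \eqref{ElmixDDvp}; (ii) by \eqref{E-div-barQ} and \eqref{E-Dk-Smix} one has $\overline{\rm S}_{\,\mD_1,\ldots,\mD_k}-{\rm S}_{\,\mD_1,\ldots,\mD_k}=\frac12\sum_i\bar Q(\mD_i,g,\I)$ modulo a divergence, so your ``$-\sum_i\int\dt\bar Q$'' should be $+\frac12\sum_i\int\dt\bar Q$ (harmless for the resulting equations, but the sign and factor should be carried correctly); (iii) Lemma~\ref{P-dT-3} is stated for statistical $\I$ (where $\I^*=\I$), while here $\I^*=-\I$, so it can only serve as a template, as you acknowledge. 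Finally, note that for $k>2$ the vanishing of the coefficient of $\dot U$ is a single vector identity summed over all $i$, and since $P_i^\bot(U)$ and the mean curvatures for $i\ne j$ have nontrivial $\mD_j$-components, the per-$j$ relations \eqref{UcriticalforI} do not fall out of a bare $\mD_j\oplus\mD_j^\bot$ split; the paper is equally terse at this point (``from which \eqref{UcriticalforI} follows''), but if you carry out the computation you should make the regrouping over $i$ explicit.
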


\begin{proof}
Let $\bar\nabla$ be a semi-symmetric connection on $(M,g,\mD_i,\mD_i^\bot)$, then \eqref{E-barQ} reduces to
\begin{equation*}
 -2\,\bar Q(\mD_i,g,U) = (n_i^\bot-n_i) \< U , H_i^\bot - H_i \> + n_i^\bot n_i \<U,\, U\> - n_i^\bot \< U^\bot, U^\bot \> - n_i \< U^\top, U^\top \>,
\end{equation*}
see \cite[Lemma~6(a)]{rz-3} for $k=2$.
Let $U_t,\ |t|<\epsilon$, be compactly supported vector fields on $M$, and let $U=U_0$ and $\dot U = \dt U_t |_{t=0}$.
Then, separating parts with $P_i(\dot U)$ and $P_i^\bot(\dot U)$, we get
\begin{eqnarray*}
 2\,\dt\bar Q(\mD_i,g,U_t) |_{\,t=0} =
   \<\,(n_i^\bot-n_i) H_i^\bot -2\,n_i(n_i^\bot-1)\,P_i^\bot(U),\,P_i^\bot(\dot U)\> \\
 +\,\<\,(n_i-n_i^\bot) H_i -2\,n_i^\bot(n_i-1)\,P_i(U),\,P_i(\dot U)\>,
\end{eqnarray*}
see \cite[Proposition~10]{rz-3} for $k=2$, from which \eqref{UcriticalforI} follows.
 For a $\mD_j$-variation of metric, up to divergences of compactly supported vector fields we have, see \cite[Lemma~6(b)]{rz-3} for $k=2$,
\begin{equation*}
 \dt\bar Q(\mD_i,g_t,U) |_{\,t=0} =\left\{
\begin{array}{cc}
 \<B_j,\,\frac14\,(n_i^\bot-n_i)(\Div P_i(U))\,g_j & \\
 \ -\frac12\,n_i^\bot(n_i-1)\,P_i(U)^\flat\otimes P_i(U)^\flat\> , & \quad i\ne j, \\
 \<B_j,\,\frac14\,(n_j-n_j^\bot)(\Div P_j^\bot(U))\,g_j & \\
 \ -\frac12\,n_j(n_j^\bot-1) P_j^\bot(U)^\flat\otimes P_j^\bot(U)^\flat\>, & \quad i = j.
\end{array}\right.
\end{equation*}
From this and \eqref{ElmixDDvp} we get \eqref{UELD}.
\end{proof}

Although $\overline\Ric_{\,\mD}$ in \eqref{E-gravity} has a long expression for arbitrary linear connections $\bar\nabla$ and is not given here explicitly,
for particular case of semi-symmetric connections,
due to Theorem~\ref{propUconnectionEL}, we present the mixed Ricci tensor explicitly
by its restrictions on subbundles $\mD_j$,
\begin{eqnarray*}
 && \overline\Ric_{\,\mD\,|\,\mD_j\times\mD_j} = -{\cal Q}(\mD_j,g) +\mu_j \,g_j
 \\
 && +\,\frac12\,n_j^\bot(n_j-1) P_j(U)^\flat\otimes P_j(U)^\flat -\frac14\,(n_j-n_j^\bot)\big(\Div P_j^\bot(U) + \frac{Z_j}{2-n}\,\big) g_j \\
 && +\sum\nolimits_{\,i\ne j}\big[\,\frac12\,n_i^\bot(n_i-1) P_i^\bot(U)^\flat\otimes P_i^\bot(U)^\flat -\frac14\,(n_i^\bot-n_i)(\Div P_i(U))\,g_j\,\big],
\end{eqnarray*}
also $\overline{\cal S}_\mD = -\tr_g \sum_{\,j}{\cal Q}(\mD_j,g) +\sum_{\,j} n_j\,\mu_j+\frac{2}{2-n}\sum_j Z_j$,
where $\Ric_{\,\mD}$ is given in \eqref{E-main-0ij-k}, $n>2$ and
\begin{eqnarray*}
 Z_j \eq \frac12\,n_j^\bot(n_j-1)\<P_j(U),P_j(U)\>  -\frac14\,n_j(n_j-n_j^\bot)\Div P_j^\bot(U) \\
 \plus\sum\nolimits_{\,i\ne j}\big(\,\frac12\,n_i(n_i^\bot-1)\<P_i^\bot(U),P_i^\bot(U)\> -\frac14\,n_i^\bot(n_i^\bot-n_i)\Div P_i(U)\,\big).
\end{eqnarray*}
This is because $\overline\Ric_{\,\mD}-\frac{1}{2}\,\overline{\rm S}\,_\mD\cdot g=0$ is equivalent to the Euler-Lagrange equations for \eqref{Eq-Smix-g}.

\baselineskip=11.9pt

\end{document}